\theoremstyle{plain}
\newtheorem{thm}{Theorem}[section]
\newtheorem{cor}[thm]{Corollary}
\newtheorem{lem}[thm]{Lemma}
\newtheorem{prop}[thm]{Proposition}
\newtheorem{defn}{Definition}[section]
\newtheorem{rmk}{Remark}[section]
\newcommand{\R}{\mathbb{R}}
\newcommand{\bE}{\mathbb{E}}
\newcommand{\bR}{\mathbb{R}}
\newcommand{\bP}{\mathbb{P}}
\newcommand{\cL}{\mathcal{L}}
\newcommand{\cM}{\mathcal{M}}
\newcommand{\sF}{\mathscr{F}}
\newcommand{\sP}{\mathscr{P}}
\newcommand{\A}{\textbf{A}}
\newcommand{\PC}{\textbf{P}}
\newcommand{\F}{\textbf{F}}
\newcommand{\eps}{\varepsilon}
\newcommand{\vf}{\varphi}
\newcommand{\la}{\langle}
\newcommand{\ra}{\rangle}
\newcommand{\ptl}{\partial}
\newcommand{\rrow}{\rightarrow}
\newcommand{\wt}{\widetilde}
\newcommand{\wh}{\widehat}
\begin{document}

\begin{frontmatter}

\title{Semi-linear Degenerate Backward Stochastic Partial Differential
Equations and Associated Forward Backward Stochastic Differential
Equations\tnoteref{label1}}

\tnotetext[label1]{This paper is supported by the National Basic Research
Program of China (973 Program) with grant No. 2007CB814904, National Natural
Science Foundation of China with grant No. 10771122 and 11101090, the
Doctoral Program of Higher Education of China with grant No. 20090071110001
and the Specialized Research Fund for the Doctoral Program of Higher
Education of China with grant No. 20090071120002.}

\author[label2]{Kai Du}
\ead{kdu@fudan.edu.cn}

\author[label2]{Qi Zhang}
\ead{qzh@fudan.edu.cn}

\address[label2]{Department of Finance and Control Sciences, School of
Mathematical Sciences, Fudan University, Shanghai 200433, China.}

\begin{abstract}
In this paper, we consider the Cauchy problem of semi-linear degenerate
backward stochastic partial differential equations (BSPDEs in short) under
general settings without technical assumptions on the coefficients. For the
solution of semi-linear degenerate BSPDE, we first give a proof for its
existence and uniqueness, as well as regularity. Then the connection between
semi-linear degenerate BSPDEs and forward backward stochastic differential
equations (FBSDEs in short) is established, which can be regarded as an
extension of Feynman-Kac formula to non-Markov frame.
\end{abstract}

\begin{keyword}
backward stochastic partial differential equations \sep semi-linear
degenerate equations \sep forward backward stochastic differential equations
\sep Feynman-Kac formula
\end{keyword}

\end{frontmatter}

\section{Introduction}

BSPDEs were introduced by Bensoussan as the adjoint equation of SPDE control
systems. Since then BSPDEs have been applied to control theory and many other
research fields. For example, in the study of stochastic maximum principle
for stochastic parabolic PDEs or stochastic differential equations (SDEs in
short) with partial information, the adjoint equations of
Duncan-Mortensen-Zakai filtering equations are needed to solve, which are
actually BSPDEs. For this kind of application, one can refer to \cite{ha,
na-ni, ta, zh}, to name but a few. Moreover, by means of the classical
duality argument, the controllability of stochastic parabolic equations can
be reduced to the observability estimate for BSPDEs, and this duality
relation was utilized in e.g. \cite{ba-ra-te, ta-zh}. Besides the application
in control theory, BSPDEs are also used in the stochastic process theory and
mathematical finance, and we recommend the reader to see \cite{ch-pa-yo,
en-ka, ma-yo, ma-yo2} for more details.

However, the solvability and the regularity of BSPDE, even for linear BSPDE,
are tough problems due to the differential operators in the form and its
non-Markov characteristic. The recent work \cite{du-ta-zh} by Du, Tang and
Zhang made some progress and lifted the restrictions on the technical
conditions for  the Cauchy problem of linear degenerate BSPDEs. This work
motivates us to consider the Cauchy problem of semi-linear degenerate BSPDEs
under general settings. Actually, non-linear stochastic equations bear more
application backgrounds without the exception of non-linear BSPDEs. For
instance, Peng \cite{pe2} discussed the Bellman dynamic principle for
non-Markov processes, whose corresponding backward stochastic
Hamilton-Jacobi-Bellman equation is a fully non-linear BSPDE. Moreover, in
many subjects of mathematical finance, such as imperfect hedging, portfolio
choice, etc., non-linear BSPDEs appear as an important role and one can
consult \cite{ma-te,mu-za} for this aspect if interested.

Needless to say, more difficulties lay on the solvability of non-linear
BSPDEs. In fact, the solvability of solution to the fully non-linear BSPDE
put forward in \cite{pe2} is still an open problem, under general settings.
Even for semi-linear BSPDE below we consider in this paper, only few work
studied on it:
\begin{numcases}{}\label{eq:main}
  du = - \big[ \cL u + \cM q + f(t,x,u,q+u_{x}\sigma) \big]dt
  + q^{k} dW^{k}_{t}\nonumber\\
u(T,x) = \vf(x),\quad x\in \R^{d},
\end{numcases}
where
\begin{eqnarray*}\label{eq:LM}
    \cL u := a^{ij}u_{x^ix^j} + b^{i}u_{x^i} + cu\ \ {\rm and}\ \ \cM q := \sigma^{ik}q^{k}_{x^i} + \nu^{k}q^{k}.
\end{eqnarray*}
In 2002, Hu, Ma and Yong considered semi-linear BSPDE of above form, under
some specific settings and technical conditions in \cite{hu-ma-yo}. For
instance, they only considered one-dimensional equation and the coefficients
$\sigma,\nu$ were independent of $x$. One of our goal in this paper is to
lift these restrictions and derive the existence, uniqueness and regularity
of semi-linear degenerate BSPDE without technical assumptions. Also we would
like to indicate that the similar regularity of solutions are obtained in
this paper, but much weaker regularity requirements on the coefficients are
needed in comparison with \cite{hu-ma-yo}.

Our another motivation is to establish the correspondence between semi-linear degenerate BSPDE and FBSDE. It is well known that, in Markov frame, the  Feynman-Kac formula for semi-linear equations was established by Peng \cite{pe} and Pardoux-Peng  \cite{pa-pe}. This Feynman-Kac formula demonstrates a correspondence between semi-linear PDE and FBSDE whose coefficients are all Markov processes. But in the non-Markov frame, FBSDE does not correspond to a deterministic PDE any more, but a BSPDE instead, by stochastic calculus. 
Certainly, as an extension of Feynman-Kac formula, this kind of
correspondence is basically important, whether in Mathematical finance
research field or in a potential application to numerical calculus of BSPDE.
To get the correspondence, one necessary step is to derive the continuity of
solution to FBSDE.  Similar to \cite{pa-pe}, we utilize the Kolmogorov
continuity theorem to prove it. But in our settings, no uniform Lipschitz
conditions for $\varphi(x)$ and $f(s,x,0)$ with respect to $x$ are assumed.
Instead we suppose that $\varphi(\cdot)$ and $f(s,\cdot,0)$ belong to
$W^{1,p}$ space and use the Sobolev embedding theorem to get the desired
continuity.

Although \cite{hu-ma-yo, ma-yo} discussed the correspondence between BSPDE
and FBSDE, our conditions are weaker but results are stronger in the solvable
case, and thus can be applied to more equations. We expect that this kind of
correspondence under our settings has independent interest in the areas of
both SPDEs and backward stochastic differential equations (BSDEs in short).

The rest of this paper is organized as follows. In Section 2, we clarify all
necessary notations and state the existing results used in this paper. In
Section 3, we prove the existence, uniqueness and regularity of solution to
semi-linear degenerate BSPDE. The correspondence between semi-linear
degenerate BSPDEs and FBSDEs is established in Section 4.

\section{Preliminaries}
\setcounter{equation}{0}

\ \ \ \ \ Let $(\Omega,\mathscr{F},\{\mathscr{F}_{t}\}_{t\geq 0},\bP)$ be a
complete filtered probability space, among which the filtration
$\{\mathscr{F}_{t}\}_{t\geq 0}$ is generated by a $d'$-dimensional Wiener
process $W=\{W_t;t\geq 0\}$ and all the $\bP$-null sets in $\mathscr{F}$.
Denote by $\mathscr{P}$ the predictable $\sigma$-algebra associated with
$\{\mathscr{F}_{t}\}_{t\geq 0}$.


The following notations will be used in this paper:

$\bullet$ 
For any multi-index $\gamma=(\gamma_1,\dots,\gamma_d)$, we denote
$$D^{\gamma}=D^{\gamma}_x:=\bigg(\frac{\ptl}{\ptl x^1}\bigg)^{\gamma_1}
\bigg(\frac{\ptl}{\ptl x^2}\bigg)^{\gamma_2} \cdots \bigg(\frac{\ptl}{\ptl
x^d}\bigg)^{\gamma_d}$$ and $|\gamma|=\gamma_1+\cdots +\gamma_d$.

$\bullet$ For $n\in\mathbb{Z}^+$, $0<\alpha<1$, denote by
$C_{0}^{\infty}=C_{0}^{\infty}(\mathbb{R}^d)$ the set of infinitely
differentiable real functions of compact support on $\R^d$, by
$C^n=C^n(\mathbb{R}^d)$ the set of $n$ times continuously differentiable
functions on $\R^d$ such that
$$\|u\|_{C^n} := \sum_{|\gamma|\leq n} \sup_{x\in \R^d} |D^{\gamma}u(x)| <
\infty,$$ and by $C^{n,\alpha}=C^{n,\alpha}(\mathbb{R}^d)$ the set of
H$\ddot{\rm o}$lder continuity functions on $\R^d$ such that $$ \left\Vert
u\right\Vert _{C^{n,\alpha }}:=\left\Vert u\right\Vert
_{C^{n}}+\sum_{\left\vert \gamma \right\vert =n}\sup_{x,y\in \mathbb{R}%
^{d},x\neq y}\frac{\left\vert D^{\gamma }u(x)-D^{\gamma }u(y)\right\vert }{%
\left\vert x-y\right\vert ^{\alpha }}<\infty.$$

$\bullet$ For $p>1$ and integer $m\geq 0$, we denote by
$W^{m,p}=W^{m,p}(\bR^d)$ the Sobolev space of real functions on $\bR^d$ with
a finite norm
$$\|u\|_{m,p}:=\bigg(\sum_{|\gamma|\leq
m}\int_{\bR^d}|D^{\gamma}u|^{p}dx\bigg) ^{\frac{1}{p}},$$ where $\gamma$ is a
multi-index. In particular, $W^{0,p}=L^p$. It is well known that $W^{m,2}$ is
a Hilbert space and its inner product is denoted by $\la \cdot,\cdot \ra_m$.

$\bullet$ For $p>1$ and integer $m\geq 0$, we denote by
$W^{m,p}(d')=W^{m,p}(\R^d;\R^{d'})$ the Sobolev space of $d'$ dimensional
vector-valued functions on $\bR^d$ with the norm $\|v\|_{m,p} =
(\sum_{k=1}^{d'}\|v^{k}\|_{m,p}^{p})^{1/p}$.

$\bullet$ Denote by $L^{p}_{\sP}W^{m,p}$ (resp. $L^{p}_{\sP}W^{m,p}(d')$) the
space of all predictable process $u: \Omega\times[0,T]\longrightarrow L^p$
(resp. $u: \Omega\times[0,T]\longrightarrow L^p(\mathbb{R}^{d'})$) such that
$u(\omega,t)\in W^{m,p}$ (resp. $u(\omega,t)\in W^{m,p}(d')$) for a.e.
$(\omega,t)$ and
$$\mathbb{E}\int_0^T\|u(t)\|_{m,p}^p dt <\infty.$$

$\bullet$ Denote by $L^{p}_{\sP}CW^{m,p}$ (resp. $L^{p}_{\sP}C_{w}W^{m,p}$)
the space of all predictable process $u: \Omega\times[0,T]\longrightarrow
W^{m,p}$ strongly (resp. weakly) continuous with respect to $t$ on $[0,T]$
for a.s. $\omega$, such that
$$\mathbb{E}\sup_{t\in[0,T]}\|u(t)\|_{m,p}^p < \infty.$$

Moreover, throughout this paper the summation convention is in force for
repeated indices.

For the coefficients in the semi-linear BSPDE (\ref{eq:main}), we always
assume that $a=(a^{ij})_{d\times d},\ b=(b^{1},\cdots,b^{d}),\ c,\
\sigma=(\sigma^{ik})_{d\times d'}$ and $\nu=(\nu^1,\cdots,\nu^{d'})$ are
$\mathscr{P} \times \mathscr{B}(\R^d)$-measurable with values on the set of
real symmetric $d\times d$ matrices, $\mathbb{R}^{d}$, $\mathbb{R}^1$,
$\mathbb{R}^{d\times d'}$ and $\mathbb{R}^{d'}$, respectively; the real
function $f(t,x,v,r)$ defined on
$\Omega\times[0,T]\times\R^{d}\times\R^1\times\R^{d'}$ is $\mathscr{P} \times
\mathscr{B}(\R^d)$-measurable for each $(v,r)$ and continuous in $(v,r)$ for
each $(\omega,t,x)$; the real function $\vf$ is $\mathscr{F}_T\times
\mathscr{B}(\R^d)$-measurable. Moreover, the following conditions are
needed.\medskip

\noindent \emph{Hypotheses.} For a given constant $K_m\geq 0$ and a given
integer $m \geq 0$,

$(\A_m)$~ the functions $b^i,c,\nu^k$ and their derivatives with respect to
$x$ up to the order $m$, as well as $a^{ij},\sigma^{ik}$ and their
derivatives up to the order $\max\{2,m\}$, are bounded by $K_m$;

$(\PC)$~ (\emph{parabolicity}) for each $(\omega,t,x)\in
\Omega\times[0,T]\times\bR^d$,
$$\big[ 2 a^{ij}(t,x)-\sigma^{ik}\sigma^{jk}(t,x) \big]
\xi^i\xi^j\geq 0,\ {\rm for}\ {\rm arbitrary}\ \xi\in \bR^d.$$

\begin{defn}\label{def:sol}
  We call a pair functions $(u,q)\in L^{2}_{\sP}W^{1,2}\times
  L^{2}_{\sP}W^{0,2}(d')$ a (generalized) solution of BSPDE
  \eqref{eq:main} if for each $\eta\in
  C^{\infty}_{0}$ and a.e. $(\omega,t)$,
  \begin{eqnarray}\label{eq:sol}\begin{split}
    \la u(t),\eta\ra_{0}=&\la\vf,\eta\ra_{0}+\int_{t}^{T}
    \la \cL u + \cM q + f(s,x,u,q + u_{x}\sigma),
    \eta \ra_0 ds\\
    &-\int_{t}^{T}\la q(s),\eta\ra_0 dW_{s}\ \ \bP-{\rm a.s.}
    \end{split}
  \end{eqnarray}
\end{defn}

\begin{rmk}
  In \eqref{eq:sol}, the term $\la a^{ij}u_{x^ix^j},\eta \ra_0$ is
  understood as $$-\la a^{ij}u_{x^i},\eta_{x^j} \ra_0
  - \la a^{ij}_{x^j}u_{x^i},\eta \ra_{0}.$$
\end{rmk}
\smallskip

For convenience, we do a transform in equation \eqref{eq:main} by setting
\begin{equation}\label{eq:wh.q}
  \wh{q} = q + u_{x}\sigma.
\end{equation}
Define $\alpha^{ij} = \frac{1}{2}\sigma^{ik}\sigma^{jk}$. Then equation
\eqref{eq:main} can be rewritten as the following form:
\begin{numcases}{}\label{eq:trans}
  du = - \big[ \widehat{\cL} u + \cM \wh{q} +
  f(t,x,u,\wh{q}) \big]dt
  + (\wh{q}-u_{x}\sigma) dW_{t}\nonumber\\
u(T,x) = \vf(x),\quad x\in \R^{d},
\end{numcases}{}
where
\[\begin{array}{c}
\widehat{\cL}u = (a^{ij}-2\alpha^{ij}) u_{x^i x^j} + \wt{b}^{i} u_{x^i} + c
u\ \ {\rm and}\ \ \wt{b}^i = b^{i} - \sigma^{ik}_{x^j} \sigma^{jk} - \nu^{k}
\sigma^{ik}.
\end{array}\]

It is clear that a function pair $(u,q)$ satisfies \eqref{eq:sol} if and only
if $(u,\hat{q})$ satisfies the following
\begin{eqnarray}\label{eq:sol2}
    \la u(t),\eta\ra_{0}&=&\la\vf,\eta\ra_{0}+\int_{t}^{T}
    \la \widehat{\cL} u(s) + \cM \wh{q}(s) + f(s,x,u(s),\wh{q}(s)),
    \eta \ra_0 ds\nonumber\\
    &&-\int_{t}^{T}\la \wh{q}(s)-u_{x}\sigma(s),
    \eta\ra_0 dW_{s}~.
\end{eqnarray}

To investigate semi-linear BSPDEs, we need some results about linear
equations. In the linear case, $f$ in equation \eqref{eq:main} is taken to be
independent of the last two variables, i.e.
\[f(t,x,v,r) = F(t,x),\]
where the real function $F$ is $\sP\times B(\R^d)$-measurable. Then we have

\begin{thm}\label{lem:l.eq.2} (Theorem 2.1 in \cite{du-ta-zh})
  Let conditions \emph{$(\A_m)$} and \emph{$(\PC)$} be satisfied for given $m \geq 1$.
  If
    $f \in L^{2}_{\sP}W^{m,2}$ and $\vf \in L^2_{\sF_T}(\Omega;W^{m,2})$, then BSPDE \eqref{eq:main} has a unique generalized
  solution $(u,q)$ such that
    $$u \in L^{2}_{\sP}C_{w}W^{m,2}~~{\rm and}~~
    q + \nabla u \,\sigma \in L^{2}_{\sP}W^{m,2}(d'),$$ and
    for any integer $m_1 \in [0,m]$, we have the estimates
    \begin{eqnarray}\label{est:l.eq.2}
      \nonumber &&\mathbb{E} \sup_{t \leq T} \|u(t)\|_{m_1,2}^{2}
      + \mathbb{E}\int_{0}^{T} \|(q + \nabla u\,\sigma)(t)\|_{m_1,2}^{2}\,dt
      \\
      &&\leq~C \mathbb{E}\bigg( \|\vf\|_{m_1,2}^{2}
      + \int_{0}^{T} \|f(t)\|_{m_1,2}^{2}\,dt \bigg),
    \end{eqnarray}
    $C$ is a
   generic constant which depends only on $d,d',K_m,m$ and $T$.

    In addition, if
    $f \in L^{p}_{\sP}W^{m,p}$ and $\vf \in L^p_{\sF_T}(\Omega;W^{m,p})$
    for $p \geq 2$, then $u \in L^{p}_{\sP}C_{w}W^{m,p}$, and
    for any integer $m_1 \in [0,m]$,
    \begin{equation*}\label{est:l.eq.p}
      \mathbb{E} \sup_{t \leq T} \|u(t)\|_{m_1,p}^{p}
      \leq C e^{Cp}\mathbb{E}\bigg( \|\vf\|_{m_1,p}^{p}
      + \int_{0}^{T} \|f(t)\|_{m_1,p}^{p}\,dt \bigg).
    \end{equation*}
\end{thm}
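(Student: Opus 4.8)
The statement is purely linear, so the plan is to combine a vanishing-viscosity regularization with an energy estimate that exploits the parabolicity condition $(\PC)$ through the transformed unknown $\wh q = q + u_x\sigma$. First I would pass to the equivalent formulation \eqref{eq:sol2}, whose natural unknown is the pair $(u,\wh q)$, and regularize the degenerate leading coefficient by setting $a^{ij}_\eps = a^{ij} + \eps\,\delta^{ij}$. For fixed $\eps>0$ the matrix $2a_\eps - \sigma\sigma^{\top}$ is uniformly positive, so the regularized BSPDE is nondegenerate; its unique solvability in $L^{2}_{\sP}C_{w}W^{m,2}$ follows from standard nondegenerate BSPDE theory (duality with a forward SPDE, or a Galerkin/monotonicity argument) together with the a priori bounds below. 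The whole difficulty is then to make those bounds uniform in $\eps$ and to pass to the limit.

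The core computation is Itô's formula applied to $\|D^\gamma u_\eps(t)\|_0^2$ and summed over $|\gamma|\le m_1$. After integration by parts in the $\cL$- and $\cM$-terms, the quadratic-variation contribution $\|D^\gamma q_\eps\|_0^2$ combines with the $\cM$ cross term $\la \sigma^{ik}(D^\gamma u_\eps)_{x^i},D^\gamma q^k_\eps\ra_0$ and the leading $\cL$ term to produce, at top order,
\begin{equation*}
  -\|D^\gamma\wh q_\eps\|_0^2 - \la (2a^{ij}-\sigma^{ik}\sigma^{jk})(D^\gamma u_\eps)_{x^i},(D^\gamma u_\eps)_{x^j}\ra_0 - 2\eps\|\nabla D^\gamma u_\eps\|_0^2,
\end{equation*}
where $\wh q_\eps = q_\eps + (u_\eps)_x\sigma$. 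By $(\PC)$ the last two terms are $\le 0$, so only $-\|D^\gamma\wh q_\eps\|_0^2$ need be retained, and after moving it to the left one is left with
\begin{equation*}
  \|u_\eps(t)\|_{m_1,2}^2 + \int_t^T\|\wh q_\eps\|_{m_1,2}^2\,ds \le \|\vf\|_{m_1,2}^2 + \int_t^T\big(R_\eps + 2\la u_\eps,f\ra_{m_1}\big)\,ds - M_t,
\end{equation*}
with $M_t$ a martingale and $R_\eps$ gathering all commutator and lower-order terms.

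The main obstacle lies in $R_\eps$: because the equation is only degenerate parabolic, the energy identity gives no control of $\|\nabla u_\eps\|$ beyond the (merely nonnegative) coercivity term, yet differentiating the second-order operator creates commutators that pair a gradient of order $m_1+1$ against a term of order $m_1$. These are handled by the classical inequality for a bounded nonnegative $C^2$ matrix field, namely $|\nabla(2a-\sigma\sigma^{\top})|^2 \le C\,(2a-\sigma\sigma^{\top})$, which is exactly why $(\A_m)$ demands two derivatives of $a,\sigma$; it allows one to absorb the dangerous cross terms into the discarded sign-definite term and to bound the remainder by $C\|u_\eps\|_{m_1,2}^2 + \tfrac12\|\wh q_\eps\|_{m_1,2}^2$. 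Taking expectations, applying Gronwall's inequality, and using the Burkholder--Davis--Gundy inequality for the $\sup_t$ norm then yields \eqref{est:l.eq.2} with a constant independent of $\eps$.

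Finally I would let $\eps\to0$. The uniform bounds make $u_\eps$ bounded in the Hilbert space $L^2(\Omega\times[0,T];W^{m,2})$ and $\wh q_\eps$ bounded in $L^{2}_{\sP}W^{m,2}(d')$, so along a subsequence both converge weakly; since every term of \eqref{eq:sol2} is linear, the weak limit solves the equation and inherits \eqref{est:l.eq.2} by lower semicontinuity, the $\sup_t$ and weak-continuity ($C_w$) properties following from the estimate. Uniqueness is immediate from the same estimate applied to the difference of two solutions with zero data. The $L^p$ statement for $p>2$ is proved in parallel by applying Itô's formula to $\int_{\R^d}|D^\gamma u|^p\,dx$ instead of the $L^2$ norm: the parabolicity again renders the principal term sign-definite (now carrying the factor $p(p-1)$), and tracking the $p$-dependence of the resulting Young and Burkholder--Davis--Gundy constants produces the factor $e^{Cp}$.
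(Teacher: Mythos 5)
This theorem is imported from \cite{du-ta-zh} rather than proved in the paper, but your proposal reconstructs essentially the strategy of that cited proof, which is also mirrored in the paper's own proof of Lemma \ref{lem:lambda.est}: vanishing-viscosity regularization by $\eps\Delta$, It\^o's formula for Sobolev norms of the transformed pair $(u,\wh{q})$ with $\wh{q}=q+u_x\sigma$, sign-definiteness of the principal term from $(\PC)$, absorption of the commutator terms via the inequality $|\nabla(2a-\sigma\sigma^{\top})|^{2}\le C\,(2a-\sigma\sigma^{\top})$ made available by the $C^2$ bounds in $(\A_m)$, Gronwall plus Burkholder--Davis--Gundy for the supremum estimate, and weak passage to the limit as $\eps\to0$ with uniqueness from the same estimate. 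Your handling of the $L^p$ case by applying It\^o's formula to $|D^{\gamma}u|^{p}$ and tracking the $p$-dependence of the constants likewise matches the cited argument, so the proposal is correct and takes the same route.
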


In the remaining of this paper, we still use $C>0$ as a generic constant only
depending on given parameters, and when needed, a bracket will follow
immediately after $C$ to indicate what parameters $C$ depends on.

However, \eqref{est:l.eq.2} is not enough to obtain the estimates of the
solution to the semi-linear equation, and we need more preparations. First
let's see a lemma below.

\begin{lem}\label{lem:lambda.est}
  Let conditions \emph{$(\A_1)$} and \emph{$(\PC)$} be satisfied.
  Then 
  there exists a positive constant $C(d,d',K_1,T)$
  such that for any positive number $\lambda > C+1$,
  \begin{equation}\label{est:lambda}
    \mathbb{E}\int_{0}^{T} e^{\lambda t}\Big( \|u\|_{1,2}^{2}
    + \|q+u_{x}\sigma\|_{1,2}^{2} \Big) dt
    \leq 2 e^{\lambda T}\mathbb{E} \|\vf\|_{1,2}^{2}
    + \frac{2}{\lambda-C-1} \mathbb{E}\int_{0}^{T} e^{\lambda t}\|F(t)\|_{1,2}^{2}dt.
  \end{equation}
\end{lem}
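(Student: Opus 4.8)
The plan is to derive \eqref{est:lambda} from a single weighted energy identity, letting the parabolicity condition $(\PC)$ do the decisive work through a completing-the-square argument. Concretely, I would apply the It\^o formula for the square of the $W^{1,2}$-norm to the process $e^{\lambda t}\|u(t)\|_{1,2}^2$ — legitimate because the solution supplied by Theorem~\ref{lem:l.eq.2} (with $m=1$) satisfies $u\in L^2_{\sP}C_w W^{1,2}$ and $\wh q=q+u_x\sigma\in L^2_{\sP}W^{1,2}(d')$ — then integrate over $[0,T]$ and take expectation so that the stochastic integral drops out. Writing $G:=2\la u,\cL u+\cM q+F\ra_1-\|q\|_{1,2}^2$ for the drift produced by the norm (the pairings with second-order terms understood distributionally as after Definition~\ref{def:sol}), this yields the identity
\begin{equation*}
e^{\lambda T}\bE\|\vf\|_{1,2}^2-\bE\|u(0)\|_{1,2}^2
=\lambda\,\bE\!\int_0^T e^{\lambda s}\|u\|_{1,2}^2\,ds
-\bE\!\int_0^T e^{\lambda s}G\,ds .
\end{equation*}
Since $\bE\|u(0)\|_{1,2}^2\ge0$, the whole matter reduces to bounding $G$ from above.

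The heart of the argument is the coercivity estimate. Integrating by parts in the second-order term of $\cL u$ and in the term $\sigma^{ik}q^k_{x^i}$ of $\cM q$, and completing the square against $-\|q\|_0^2$, one finds at the base level the algebraic identity
\begin{equation*}
-2\la a^{ij}u_{x^i},u_{x^j}\ra_0-2\la \sigma^{ik}u_{x^i},q^k\ra_0-\|q\|_0^2
=-\|\wh q\|_0^2-\la(2a^{ij}-\sigma^{ik}\sigma^{jk})u_{x^i},u_{x^j}\ra_0 ,
\end{equation*}
whose last term is $\le0$ \emph{precisely} by $(\PC)$. Thus the indefinite cross term $\la u_x\sigma,q\ra$ is exactly what parabolicity absorbs into the square $\|\wh q\|_0^2$, and the principal part contributes at most $-\|\wh q\|_0^2$. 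I expect the genuine difficulty to be carrying this computation out after differentiating the equation once in $x$: differentiating the coefficients creates extra second-order terms (e.g.\ $\la u_{x^l},a^{ij}_{x^l}u_{x^ix^j}\ra$) that, in the \emph{degenerate} regime, cannot be dominated by an $H^2$-bound. The resolution — as in \cite{du-ta-zh} — is that the only second derivatives surviving the symmetrization occur in $\sigma$-contracted combinations, which are re-expressed through $\wh q_x$ and folded back into the first-order completed square $\|\wh q\|_{1,2}^2$, the residue being lower order and controlled by $(\A_1)$.

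Combining both levels and estimating the remaining lower-order terms via the boundedness hypothesis $(\A_1)$ and Young's inequality — the cross terms $\la u,\wh q\ra$ being absorbed into half of $\|\wh q\|_{1,2}^2$ — I would arrive at $2\la u,\cL u+\cM q\ra_1-\|q\|_{1,2}^2\le-\tfrac12\|\wh q\|_{1,2}^2+C\|u\|_{1,2}^2$. For the inhomogeneity I would avoid a plain Young inequality and instead split $2\la u,F\ra_1\le(\lambda-C-1)\|u\|_{1,2}^2+\tfrac{1}{\lambda-C-1}\|F\|_{1,2}^2$, so that the $\|u\|_{1,2}^2$ part is consumed by the $\lambda$-weight and the coefficient $\tfrac{1}{\lambda-C-1}$ emerges; this is what forces $\lambda>C+1$ and reproduces the exact constant in \eqref{est:lambda}. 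Substituting into the energy identity, the $\lambda\,\bE\!\int e^{\lambda s}\|u\|_{1,2}^2$ and $(\lambda-1)\,\bE\!\int e^{\lambda s}\|u\|_{1,2}^2$ contributions cancel down to one copy, giving
\begin{equation*}
\bE\!\int_0^T e^{\lambda s}\|u\|_{1,2}^2\,ds
+\tfrac12\bE\!\int_0^T e^{\lambda s}\|\wh q\|_{1,2}^2\,ds
\le e^{\lambda T}\bE\|\vf\|_{1,2}^2
+\tfrac{1}{\lambda-C-1}\bE\!\int_0^T e^{\lambda s}\|F\|_{1,2}^2\,ds ,
\end{equation*}
and multiplying by $2$ yields \eqref{est:lambda}. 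The only delicate technical points are the rigorous justification of the It\^o formula for $\|u(t)\|_{1,2}^2$ at the level of generalized solutions and the degenerate first-order coercivity computation, both of which I would handle by mollifying the equation and passing to the limit, as in \cite{du-ta-zh}.
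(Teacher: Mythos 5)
Your proposal is correct and follows essentially the same route as the paper: a weighted It\^o identity for $e^{\lambda t}\|u\|_{1,2}^2$, the degenerate coercivity estimate from Lemma 3.1 of \cite{du-ta-zh} (completing the square so that $(\PC)$ absorbs the cross term into $-\tfrac12\|\wh q\|_{1,2}^2$ plus $C\|u\|_{1,2}^2$), and the identical $(\lambda-C-1)$ splitting of $2\la u,F\ra_1$ producing the exact constant. The only difference is the regularization device you defer to at the end: the paper adds a vanishing-viscosity term $\eps\Delta$, solves the resulting super-parabolic BSPDE (via Du--Meng \cite{du-me}, so $u^{\eps}\in L^{2}_{\sP}W^{2,2}$ and the computation is legitimate for $(u^{\eps},\wh q^{\eps})$ with constants independent of $\eps$), and then passes to the weak limit $(u,\wh q)$ identified through the construction in \cite{du-ta-zh}, concluding by weak lower semicontinuity of the norm.
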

\begin{proof}
Take a small number $\eps > 0$. Consider the following BSPDE with
super-parabolic condition:
\begin{numcases}{}\label{eq:eps}
    d u^{\eps} = -\big[(\eps \Delta + \cL) u^{\eps} + \cM q^{\eps}
    + F\big]dt + q^{\eps} dW_t\nonumber\\
    u^{\eps}(T) = \vf.
\end{numcases}{}
In view of Theorem 2.3 in Du-Meng \cite{du-me}, equation \eqref{eq:eps} has a
unique solution $(u^{\eps},q^{\eps})$ satisfying
  $$u^{\eps} \in L^{2}_{\sP}W^{2,2} \cap L^{2}_{\sP}CW^{1,2},~~~~
  q^{\eps} \in L^{2}_{\sP}W^{1,2}(d').$$
Doing a similar transformation as in (\ref{eq:wh.q}) with $\wh{q}^{\eps} =
q^\eps + u^\eps_x \sigma$ and applying It\^o formula (c.f. \cite{kr-ro}) to
$e^{\lambda t}(\sum_{|\alpha|\leq 1}|D^{\alpha}u^\eps|^2 )$, we have
\begin{eqnarray}\label{proof:001}
    && \mathbb{E} \|u^\eps(0)\|_{1,2}^2 - e^{\lambda T}\mathbb{E} \|\vf\|_{1,2}^2
    + \lambda \mathbb{E}\int_{0}^{T}e^{\lambda t} \|u^\eps(t)\|_{1,2}^2 dt\nonumber\\
    && =
    2\sum_{|\alpha|\leq 1} \mathbb{E} \int_0^T e^{\lambda t} \la D^{\alpha}u^\eps,
    D^{\alpha}\big[(\eps \Delta + \wh{\cL}) u^{\eps} + \cM \wh{q}^{\eps}
    + F\big]\ra_0 dt\nonumber\\
    &&~~~~ - \mathbb{E} \int_0^T e^{\lambda t}
    \|\wh{q}^{\eps}-u^{\eps}_x\sigma\|_{1,2}^2dt.
\end{eqnarray}
From Lemma 3.1 in Du-Tang-Zhang \cite{du-ta-zh}, we know that there exists a
constant $C$ depending only on $d,d',K_1,T$, but not $\eps$, such that
\begin{eqnarray*}
  \begin{split}
    & 2\sum_{|\alpha|\leq 1} \la D^{\alpha}u^\eps,
    D^{\alpha}\big[(\eps \Delta + \wh{\cL}) u^{\eps} + \cM \wh{q}^{\eps}
    + f\big]\ra_0
    - \|\wh{q}^{\eps}-u^{\eps}_x\sigma\|_{1,2}^2\\
    & \leq - {1\over 2}\|\wh{q}^\eps(t)\|_{1,2}^{2}
    + C\|u^{\eps}(t)\|_{1,2}^{2}
    + 2\la u^\eps, F \ra_1.
  \end{split}
\end{eqnarray*}
This along with \eqref{proof:001} yields that
\begin{eqnarray}\label{100}\begin{split}
    &\frac{1}{2}\mathbb{E}\int_{0}^{T}e^{\lambda t} \Big(\|u^\eps(t)\|_{1,2}^2
    + \|\wh{q}^\eps(t)\|_{1,2}^{2} \Big) dt\\
    & \leq e^{\lambda T}\mathbb{E} \|\vf\|_{1,2}^2 + (C-\lambda+1)
    \mathbb{E}\int_{0}^{T}e^{\lambda t}\|u^{\eps}(t)\|_{1,2}^{2}dt
    + 2\mathbb{E} \int_{0}^{T}e^{\lambda t}\la u^\eps, F \ra_1(t)dt.
\end{split}\end{eqnarray}
Then taking $\lambda > C +1$ and noting that
\[2\la u^\eps, F \ra_1(t) \leq (\lambda-C-1)\|u^{\eps}(t)\|_{1,2}^{2}
+\frac{1}{\lambda-C-1}\|F(t)\|_{1,2}^{2},\] we obtain estimate
\eqref{est:lambda} for $(u^\eps,q^\eps)$.\medskip

In view of the proof of Theorem 2.1 in Du-Tang-Zhang \cite{du-ta-zh}, we know
that there exists a subsequence $\{\eps_n\}\downarrow 0$ such that
$(u^\eps,\wh{q}^\eps)$ converges weakly to $(u,\wh{q})$ in
$L^2_{\sP}W^{1,2}\times L^2_{\sP}W^{1,2}(d')$ as $n\rrow \infty$. Hence
estimate \eqref{est:lambda} follows from the resonance theorem and the proof
is complete.
\end{proof}

\begin{rmk}
(i) Following the proof of Lemma \ref{lem:lambda.est}, we can easily prove
\begin{eqnarray}\label{rmk:c01}
    \mathbb{E}\int_{0}^{T} e^{\lambda t}\Big( \|u\|_{0,2}^{2}
    + \|q+u_{x}\sigma\|_{0,2}^{2} \Big) dt
    \leq2 e^{\lambda T}\mathbb{E} \|\vf\|_{0,2}^{2}
    + \frac{2}{\lambda-C-1} \mathbb{E}\int_{0}^{T} e^{\lambda t}\|F(t)\|_{0,2}^{2}dt\ \ \ \ \ \
  \end{eqnarray}
  with the identical constant $C$ in Lemma \ref{lem:lambda.est}.

(ii) If we further assume that \emph{$(\A_2)$} holds,
  $f\in L^{2}_{\sP}W^{2,2}$ and $\vf \in L^{2}_{\sF_T}(\Omega;W^{2,2})$, then
  we can similarly deduce that there exists a positive constant $C(d,d',K_2,T)$
  such that for any positive number $\lambda >C+1$,
  \begin{eqnarray}\label{c4:est:lambda-2}
    &&\bE\int_{0}^{T} e^{\lambda t}\Big( \|u\|_{2,2}^{2}
    + \|q+u_{x}\sigma\|_{2,2}^{2} \Big) \,dt\nonumber\\
    &&\leq 2 e^{\lambda T}\bE \|\vf\|_{2,2}^{2}
    + \frac{2}{\lambda-C-1} \bE\int_{0}^{T}
    e^{\lambda t}\|f(t)\|_{2,2}^{2}\,dt.
\end{eqnarray}
\end{rmk}

\section{Existence, uniqueness and regularity of solutions to semi-linear BSPDEs}
\setcounter{equation}{0}

\ \ \ \ We make a further hypothesis on the function $f$ in BSPDE
\eqref{eq:main}:
\medskip

$(\F)$~ the function $f(t,x,v,r)$ satisfies\\
(1) for arbitrary $(\omega,t,x,v,r)$, $f_x$, $f_v$ and $f_r$ exist;\\
(2) $f(\cdot,\cdot,0,0) \in L^{2}_{\sP}W^{1,2}$;\\
(3) there exists a constant $L>0$ such that for each $(\omega,t,x)$,
\begin{eqnarray*}&&|f(t,x,v_1,r_1)-f(t,x,v_2,r_2)|
  +\|f_x(t,x,v_1,r_1)-f_x(t,x,v_2,r_2)\|\\
  &&\leq L (|v_1 - v_2| + \|r_1 - r_2\|),~~~~~~{\rm for}\ {\rm arbitrary}~v_1,v_2\in \R,
  ~~r_1,r_2\in \R^{d'}.
  \end{eqnarray*}

Obviously, $f_v$ and $f_r$ are bounded by the constant $L$.
\medskip

First we give the proof for the existence and uniqueness of solutions to
semi-linear BSPDEs.

\begin{thm}\label{thm:sl.L2}
  Let conditions \emph{$(\A_1)$}, \emph{$(\PC)$}
  and \emph{$(\F)$} be satisfied. Suppose
  $\vf\in L^2_{\sF_T}(\Omega;W^{1,2})$, then BSPDE
  \eqref{eq:main} has a unique solution
  $(u,q)$ such that
  $$u\in L^{2}_{\sP}C_wW^{1,2},~~q+u_{x}\sigma \in L^{2}_{\sP}W^{1,2}(d').$$
  Moreover, there exists a constant $C(d,d',K_1,T,L)$
  such that
  \begin{eqnarray}\label{est:sl}
    \mathbb{E}\sup_{t\in[0,T]}\|u(t)\|_{1,2}^{2} + \mathbb{E}\int_{0}^{T}\|q +
    u_{x}\sigma\|_{1,2}^{2}(t)dt
    \leq C \mathbb{E} \bigg(\|\vf\|_{1,2}^{2}
    + \int_{0}^{T}\|f(t,\cdot,0,0)\|_{1,2}^{2}dt\bigg).\ \ \ \
    \end{eqnarray}
\end{thm}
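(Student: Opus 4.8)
The plan is to prove existence and uniqueness of the semi-linear BSPDE by a Picard/contraction argument that treats the nonlinearity $f(t,x,u,\hat q)$ as a frozen source term and invokes the linear theory (Theorem~\ref{lem:l.eq.2} and Lemma~\ref{lem:lambda.est}). Concretely, I would define a map $\Phi$ on the Banach space $\mathcal{S} := L^{2}_{\sP}CW^{1,2} \times L^{2}_{\sP}W^{1,2}(d')$ (or on $L^2_\sP W^{1,2}\times L^2_\sP W^{1,2}(d')$ equipped with the $\lambda$-weighted norm appearing in \eqref{est:lambda}) as follows: given an input pair $(\bar u, \bar p)$ with $\bar p$ playing the role of $q+u_x\sigma = \hat q$, set $F(t,x) := f(t,x,\bar u, \bar p)$ and let $(u,q)$ be the unique linear solution of \eqref{eq:main} with this source, so that $\Phi(\bar u,\bar p) := (u, q+u_x\sigma)$. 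The key preliminary check is that $F \in L^2_\sP W^{1,2}$: this uses hypothesis $(\F)$, since $\|f(t,\cdot,\bar u,\bar p)\|_{1,2}$ is controlled by $\|f(t,\cdot,0,0)\|_{1,2}$ plus $L$ times $\|\bar u\|_{1,2}+\|\bar p\|_{1,2}$ once one differentiates $f(t,x,\bar u,\bar p)$ in $x$ via the chain rule and bounds $f_x, f_v, f_r$ by $L$. Thus $\Phi$ maps $\mathcal{S}$ into itself.

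The heart of the argument is the contraction estimate, and this is where I expect the main obstacle. For two inputs $(\bar u_1,\bar p_1)$ and $(\bar u_2,\bar p_2)$, the difference of outputs solves a linear BSPDE with source $f(t,x,\bar u_1,\bar p_1)-f(t,x,\bar u_2,\bar p_2)$ and zero terminal data. Applying Lemma~\ref{lem:lambda.est} (with $\vf=0$) in the $\lambda$-weighted norm gives
\begin{equation*}
  \mathbb{E}\int_0^T e^{\lambda t}\Big(\|u_1-u_2\|_{1,2}^2 + \|\hat q_1 - \hat q_2\|_{1,2}^2\Big)\,dt
  \leq \frac{2}{\lambda - C - 1}\,\mathbb{E}\int_0^T e^{\lambda t}\|f(t,\cdot,\bar u_1,\bar p_1)-f(t,\cdot,\bar u_2,\bar p_2)\|_{1,2}^2\,dt.
\end{equation*}
The delicate point is estimating the right-hand side in the $W^{1,2}$ norm: while the Lipschitz bound on $f$ itself directly controls the $L^2$ part, bounding the $x$-derivative $\partial_x[f(t,x,\bar u_1,\bar p_1)-f(t,x,\bar u_2,\bar p_2)]$ requires expanding with the chain rule and carefully grouping the $f_x$, $f_v$, and $f_r$ terms; the Lipschitz hypothesis on $f_x$ in $(\F)(3)$ is precisely what is needed to handle the mismatched $f_x$ evaluations, yielding a bound of the form $L'\big(\|\bar u_1-\bar u_2\|_{1,2}+\|\bar p_1-\bar p_2\|_{1,2}\big)$ with $L'$ depending on $L$ and the coefficient bounds. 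Combining, one obtains a factor $\tfrac{C'}{\lambda-C-1}$ in front of the weighted norm of the input difference, which is strictly less than $1$ for $\lambda$ large enough. Hence $\Phi$ is a contraction and the Banach fixed point theorem delivers a unique fixed point $(u,\hat q)$, which is the desired solution.

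Finally, I would upgrade the fixed point to the regularity and the a priori estimate claimed in the theorem. The weighted norm and the ordinary $L^2_\sP W^{1,2}$ norm are equivalent on $[0,T]$ (the weight $e^{\lambda t}$ lies between $1$ and $e^{\lambda T}$), so the fixed point lies in the stated spaces. To obtain $u\in L^2_\sP C_w W^{1,2}$ together with the supremum estimate \eqref{est:sl}, I would apply the second part of Theorem~\ref{lem:l.eq.2} (or its $L^2$ sup-norm estimate) to the linear equation satisfied by the fixed point with source $f(t,x,u,\hat q)$, and then absorb the term $\mathbb{E}\int_0^T \|f(t,\cdot,u,\hat q)\|_{1,2}^2\,dt$ using $\|f(t,\cdot,u,\hat q)\|_{1,2}\leq \|f(t,\cdot,0,0)\|_{1,2}+C(\|u\|_{1,2}+\|\hat q\|_{1,2})$ and the already-established control of $\mathbb{E}\int_0^T(\|u\|_{1,2}^2+\|\hat q\|_{1,2}^2)\,dt$ in terms of the data. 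A standard Gronwall-type absorption then yields \eqref{est:sl} with a constant depending on $d,d',K_1,T,L$.
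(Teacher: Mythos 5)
Your overall architecture---freeze the nonlinearity, solve the resulting linear equation via Theorem~\ref{lem:l.eq.2}, use the $\lambda$-weighted estimate \eqref{est:lambda}, and finally recover $u\in L^{2}_{\sP}C_wW^{1,2}$ and \eqref{est:sl} by treating $f(t,x,u,\wh{q})$ as a known source---is the same as the paper's, but the heart of your argument, the contraction in the weighted $W^{1,2}$ norm, does not go through under hypothesis $(\F)$, and this is a genuine gap. Differentiating the difference of the two frozen sources by the chain rule produces, besides the benign terms, the cross terms
\[
\big(f_v(t,x,\bar u_1,\bar p_1)-f_v(t,x,\bar u_2,\bar p_2)\big)\,\partial_x \bar u_2
\quad\text{and}\quad
\big(f_r(t,x,\bar u_1,\bar p_1)-f_r(t,x,\bar u_2,\bar p_2)\big)\,\partial_x \bar p_2 .
\]
Hypothesis $(\F)$ gives Lipschitz continuity in $(v,r)$ of $f$ and of $f_x$, but only \emph{boundedness} of $f_v$ and $f_r$; nothing controls these differences of $f_v,f_r$ by the input difference. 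And even if one strengthened $(\F)$ to make $f_v,f_r$ Lipschitz, the resulting bound $L\big(|\bar u_1-\bar u_2|+\|\bar p_1-\bar p_2\|\big)\,|\partial_x \bar u_2|$ is a product of two functions that are merely in $L^2$, so it is not dominated by $L'\big(\|\bar u_1-\bar u_2\|_{1,2}+\|\bar p_1-\bar p_2\|_{1,2}\big)$ without an $L^\infty$ bound on $\partial_x\bar u_2$, which is unavailable in your space $\mathcal{S}$. So the claimed contraction estimate is false; note the asymmetry with the self-map bound, which \emph{does} hold, because there $f_v,f_r$ multiply the derivatives of the same argument pair and boundedness suffices, while the Lipschitz condition on $f_x$ in $(\F)(3)$ controls $f_x(t,x,\bar u,\bar p)$ itself.

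The paper circumvents exactly this obstruction with a two-norm Picard scheme. First, the self-map estimate $\|f(t,\cdot,u,\wh{q})\|_{1,2}^2 \le \wt{C}\big(\|f(t,\cdot,0,0)\|_{1,2}^2+\|u\|_{1,2}^2+\|\wh{q}\|_{1,2}^2\big)$, combined with Lemma~\ref{lem:lambda.est} and an induction on $n$, yields a \emph{uniform} bound on the iterates in the weighted $W^{1,2}$ norm, hence weak compactness in $L^{2}_{\sP}W^{1,2}\times L^{2}_{\sP}W^{1,2}(d')$. Second, the contraction is run only at the $W^{0,2}$ level, via the $m=0$ analogue \eqref{rmk:c01} of the weighted estimate, where the plain Lipschitz continuity of $f$ in $(v,r)$ suffices; the strong $W^{0,2}$ limit is then identified with the weak $W^{1,2}$ limit, and uniqueness is likewise proved with the $W^{0,2}$ estimate alone. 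Your concluding step (applying the linear theory to the limit equation to get $C_wW^{1,2}$-regularity and \eqref{est:sl}) matches the paper's Step~4 and is sound; to repair the proof, keep the uniform $W^{1,2}$ bound as a compactness device and downgrade the contraction norm to $W^{0,2}$ rather than attempting a fixed point directly in $W^{1,2}$.
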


\begin{proof}
We mainly use the Picard iteration in the proof of this theorem.

\emph{Step 1.} Define a successive sequence by setting
$$(u_0,q_{0}) = (0,0)$$
and $\{(u_{n},q_{n})\}_{n\geq1}$ to be the unique solution of the following
equations:
\begin{numcases}{}\label{eq:iteration}
    du_{n} = - \big[ \cL u_{n} + \cM q_{n}
  + f(t,x,u_{n-1},q_{n-1}+u_{n-1,x}\sigma) \big]dt + q_{n-1}^{k}dW^{k}_{t}\nonumber\\
    u_{n}(T) = \vf.
\end{numcases}
The solvability of equation \eqref{eq:iteration} is indicated by Theorem
\ref{lem:l.eq.2} since one can easily check that
\[f(\cdot,\cdot,u_{n-1},q_{n-1}+u_{n-1,x}\sigma) \in L^{2}_{\sP}W^{1,2} \]
by virtue of condition $(\F)$. Then we obtain a sequence
$\{(u_{n},\wh{q}_n)\}_{n\geq0}\subset L^{2}_{\sP}C_wW^{1,2} \times
L^{2}_{\sP}W^{1,2}(d')$, where
\[ \wh{q}_{n} = q_{n}+u_{n,x}\sigma. \]

\emph{Step 2.} For the sequence $\{(u_{n},\wh{q}_n)\}_{n\geq0}$ defined in
Step 1, we prove that a subsequence converges weakly in $L^{2}_{\sP}W^{1,2}
\times L^{2}_{\sP}W^{1,2}(d')$. First noticing condition $(\F)$, we have that
for each integer $n\geq 1$, there exists a positive constant $\widetilde{C}$
depending only on $L$ such that
\begin{eqnarray}\label{proof:003}\begin{split}
  &\mathbb{E} \int_0^Te^{\lambda t}\|f(t,\cdot,u_{n-1},\wh{q}_{n-1})\|_{1,2}^2 dt\\
  &\leq \wt{C} \mathbb{E} \bigg[\int_0^T e^{\lambda t}\|f(t,\cdot,0,0)\|_{1,2}^2 dt
  + \int_{0}^{T} e^{\lambda t}\Big( \|u_{n-1}\|_{1,2}^{2}
  + \|\wh{q}_{n-1}\|_{1,2}^{2} \Big) dt\bigg].
\end{split}\end{eqnarray}

If we denote the constant $C$ in \eqref{est:lambda} and \eqref{rmk:c01} by
$C_1$, then taking $\lambda_0 = 4\wt{C}+C_1+1$, we can prove a claim that for
each $n\geq 0$,
\begin{eqnarray}\label{proof:002}
  \mathbb{E}\int_{0}^{T} e^{\lambda_0 t}\Big( \|u_{n}\|_{1,2}^{2}
  + \|\wh{q}_{n}\|_{1,2}^{2} \Big) dt \leq 4 \mathbb{E} \bigg( e^{\lambda_0
  T}\|\vf\|_{1,2}^2 + \int_0^T e^{\lambda_0 t}\|f(t,\cdot,0,0)\|_{1,2}^{2} dt
  \bigg).
\end{eqnarray}
To prove it, the mathematical induction is used. Assume that
\eqref{proof:002} is true for $n-1$. Applying Lemma \ref{lem:lambda.est} to
equation \eqref{eq:iteration}, by \eqref{proof:003} we have
\begin{eqnarray*}\label{proof:005}
  \begin{split}
    &\mathbb{E}\int_{0}^{T} e^{\lambda_0 t}\Big( \|u_{n}\|_{1,2}^{2}
    + \|\wh{q}_{n}\|_{1,2}^{2} \Big) dt\\
    & \leq 2 e^{\lambda_0 T}\mathbb{E} \|\vf\|_{1,2}^{2} + \frac{2}{\lambda_0-C_1-1}
    \mathbb{E}\int_0^T e^{\lambda_0 t}\|f(t,\cdot,u_{n-1},\wh{q}_{n-1})\|_{1,2}^2 dt\\
    & \leq 2 e^{\lambda_0 T} \mathbb{E} \|\vf\|_{1,2}^{2}\\
    &~~~+ \frac{2\wt{C}}{\lambda_0-C_1-1}
    \mathbb{E} \bigg[\int_0^T e^{\lambda_0 t}\|f(t,\cdot,0,0)\|_{1,2}^2 dt + \int_{0}^{T} e^{\lambda_0 t}\Big( \|u_{n-1}\|_{1,2}^{2}
    + \|\wh{q}_{n-1}\|_{1,2}^{2} \Big) dt\bigg]\\
    & \leq 4 \mathbb{E} \bigg( e^{\lambda_0
    T}\|\vf\|_{1,2}^2 + \int_0^T e^{\lambda_0 t}\|f(t,\cdot,0,0)\|_{1,2}^{2} dt
    \bigg).
  \end{split}
\end{eqnarray*}

By \eqref{proof:002}, we immediately know that $\{(u_n,\wh{q}_n)\}_{n \geq
0}$ is uniformly bounded with the norm of $L^{2}_{\sP}W^{1,2} \times
L^{2}_{\sP}W^{1,2}(d')$. Hence there exist a subsequence $\{n'\}$ and a
function pair
$$(\widetilde{u},\widetilde{\wh{q}})
\in L^{2}_{\sP}W^{1,2} \times L^{2}_{\sP}W^{1,2}(d')$$ such that as $n'\rrow
\infty$,
\[ (u_{n'},\wh{q}_{n'})\rightharpoonup (\widetilde{u},\widetilde{\wh{q}})\ \
{\rm weakly}\ {\rm in}\ L^{2}_{\sP}W^{1,2} \times L^{2}_{\sP}W^{1,2}(d').\]

\emph{Step 3.} We then prove the strong convergence of $\{(u_n,\wh{q}_n)\}_{n
\geq 0}$ in $L^{2}_{\sP}W^{0,2} \times L^{2}_{\sP}W^{0,2}(d')$. In view of
\eqref{rmk:c01} and condition $(\F)$, taking $ \lambda=\lambda_1 = 8L^2 + C_1
+ 1$ and $n\geq1$ we have
\begin{eqnarray*}
  \begin{split}
    & \mathbb{E}\int_{0}^{T} e^{\lambda_1 t} \Big( \|u_{n+1}-u_{n}\|_{0,2}^{2}
    + \|\wh{q}_{n+1}-\wh{q}_{n}\|_{0,2}^{2} \Big) dt\\
    & \leq \frac{2}{\lambda_1-C_1-1}
    \mathbb{E}\int_{0}^{T} e^{\lambda_1 t}\|f(t,\cdot,u_{n},\wh{q}_{n})
    -f(t,\cdot,u_{n-1},\wh{q}_{n-1})\|_{0,2}^{2}dt\\
    & \leq \frac{1}{2}
    \mathbb{E}\int_{0}^{T} e^{\lambda_1 t} \Big( \|u_{n}-u_{n-1}\|_{0,2}^{2}
    + \|\wh{q}_{n}-\wh{q}_{n-1}\|_{0,2}^{2} \Big)dt,
  \end{split}
\end{eqnarray*}
which implies that $\{(u_{n},\wh{q}_{n})\}_{n\geq 0}$ is a Cauchy sequence in
the space $L^{2}_{\sP}W^{0,2} \times L^{2}_{\sP}W^{0,2}(d')$. Actually
$\{(u_{n},\wh{q}_{n}): n\geq 1\}$ is also a Cauchy sequence with the norm
$\mathbb{E}\int_{0}^{T}\|\cdot\|_{0,2}^{2}dt$ due to the norm equivalence
between $\sqrt{\mathbb{E}\int_{0}^{T} e^{\lambda_1 t}\|\cdot\|_{0,2}^{2}dt}$
and $\sqrt{\mathbb{E}\int_{0}^{T}\|\cdot\|_{0,2}^{2}dt}$ in
$L^{2}_{\sP}W^{0,2} \times L^{2}_{\sP}W^{0,2}(d')$. We denote the strong
limit of $\{(u_{n},\wh{q}_{n})\}_{n\geq 0}$ by $(u,\wh{q})$. Recalling the
subsequence $\{n'\}$ in step 2, we know that $\{(u_{n'},\wh{q}_{n'})\}$
converges strongly to $(u,\wh{q})$ in $L^{2}_{\sP}W^{0,2} \times
L^{2}_{\sP}W^{0,2}(d')$. By the uniqueness of the limit, we have
\[(u,\wh{q})= (\widetilde{u},\widetilde{\wh{q}})
\in L^{2}_{\sP}W^{1,2} \times L^{2}_{\sP}W^{1,2}(d').\]

\emph{Step 4.} Next we prove that $(u,\wh{q})$ is a solution of BSPDE
  \eqref{eq:main} to complete the existence proof. For this, we need verify that $(u,\wh{q})$ satisfies
\eqref{eq:sol2}. First we know that
\begin{eqnarray}\label{proof:004}
    \la u_{n'}(t),\eta\ra_{0}=&&\la\vf,\eta\ra_{0}+\int_{t}^{T}
    \la \widehat{\cL} u_{n'}(s) + \cM \wh{q}_{n'}(s)
    + f(s,x,u_{n'-1}(s),\wh{q}_{n'-1}(s)),
    \eta \ra_0 ds\nonumber\\
    &&-\int_{t}^{T}\la \wh{q}_{n'}(s)-u_{n',x}\sigma(s),
    \eta\ra_0 dW_{s}.
\end{eqnarray}
Since $(u_{n'-1},\wh{q}_{n'-1})$ converges strongly to $(u,\wh{q})$ in
$L^{2}_{\sP}W^{0,2} \times L^{2}_{\sP}W^{0,2}(d')$ as $n'\rrow \infty$, by
condition $(\F)$ it follows that, as $n'\rrow \infty$,
\[\mathbb{E} \int_0^T\|f(t,\cdot,u_{n'-1},\wh{q}_{n'-1})
-f(t,\cdot,u,\wh{q})\|_{0,2}^2(t)dt \longrightarrow 0.\]
Hence, for any $\eta \in C^{\infty}_{0}$, all terms of \eqref{proof:004}
converge weakly to the corresponding terms of \eqref{eq:sol2} in
$L^{2}_{\sP}(\Omega\times[0,T])$ since the operators of Lebesgue integration
and stochastic integration are continuous in
$L^{2}_{\sP}(\Omega\times[0,T])$. Therefore, $(u,\wh{q})$ is a generalized
solution of \eqref{eq:trans}. Setting $q = \wh{q}-u_x\sigma$, we know that
$(u,q)$ is a generalized solution of BSPDE \eqref{eq:main}.

Moreover, since $(u,\wh{q})$ is obtained, we regard $f(t,x,u,q+u_{x}\sigma)$
as the known coefficient and $(u,\wh{q})$ as the solution of linear BSPDE
with given $f(t,x,u,q+u_{x}\sigma)$. By condition $(\F)$,
$f(t,x,u,q+u_{x}\sigma)\in L^{2}_{\sP}W^{m,2}$. Then we get from Theorem
\ref{lem:l.eq.2} that $u\in L^2_{\sP}C_wW^{1,2}$ and \eqref{est:sl} follows.

\medskip

\emph{Step 5.} We finally deduce the uniqueness of solution to semi-linear
BSPDE. Assume that $(u_1,q_1)$ and $(u_2,q_2)$ are two generalized solutions
to BSPDE \eqref{eq:main}. Set
$\wh{q}_i = q_i + u_{i,x}\sigma$, $i=1,2$. Noticing \eqref{rmk:c01} and taking $\lambda= \lambda_1$ 
again, by condition $(\F)$ we have
\begin{eqnarray*}
    \mathbb{E}\int_{0}^{T} e^{\lambda_1 t} \Big( \|u_{1}-u_{2}\|_{0,2}^{2}
    + \|\wh{q}_{1}-\wh{q}_{2}\|_{0,2}^{2} \Big) dt
    \leq \frac{1}{2}
    \mathbb{E}\int_{0}^{T} e^{\lambda_1 t} \Big( \|u_{1}-u_{2}\|_{0,2}^{2}
    + \|\wh{q}_{1}-\wh{q}_{2}\|_{0,2}^{2} \Big)dt.
\end{eqnarray*}
The uniqueness of solution immediately follows, which completes the proof of
Theorem \ref{thm:sl.L2}.
\end{proof}
\medskip

In the remaining part of this section, the regularity of solution to
semi-linear BSPDE is explored. We consider a simpler form of BSPDE
\eqref{eq:main} with $f(t,x,v,r)$ independent of $r$:
\begin{numcases}{}\label{c4:sleq-1}
  \nonumber du = - [\cL u + \cM^k q^k + f(t,x,u)]dt + q^k dW^k_t\\
  u(T,x) = \vf(x),~~~~x\in\R^d.
\end{numcases}
For BSPDE \eqref{c4:sleq-1}, condition $(\F)$ is simplified as
follows:\medskip

$(\F')$~ the function $f(t,x,v)$ satisfies\\
(1) for arbitrary $(\omega,t,x,v)$, $f_x$ and $f_v$ exist;\\
(2) $f(\cdot,\cdot,0) \in L^{2}_{\sP}W^{1,2}$;\\
(3) there exists a constant $L>0$ such that for each $(\omega,t,x)$,
\begin{eqnarray*}
|f(t,x,v_1)-f(t,x,v_2)|+\|f_x(t,x,v_1)-f_x(t,x,v_2)\|\leq L |v_1 - v_2|,\
{\rm for}\ {\rm arbitrary}~v_1,v_2\in \R.
\end{eqnarray*}

Obviously, $f_v$ is bounded by the constant $L$.

We know from Theorem \ref{thm:sl.L2} that under conditions $(\A_1)$, $(\PC)$
  and $(\F')$, if $\vf\in
L^{2}_{\sF_T}(\Omega;W^{1,2})$, BSPDE \eqref{c4:sleq-1} has a unique solution
$(u,q)\in L^{2}_{\sP}C_wW^{1,2}\times L^{2}_{\sP}W^{0,2}(d')$. Moreover, some
regularity results for BSPDE \eqref{c4:sleq-1} can be obtained.
\begin{thm}\label{c4:bounded}
We assume that conditions \emph{$(\A_1)$}, \emph{$(\PC)$}
  and \emph{$(\F')$} are satisfied, and for $p\geq 2$, $f(\cdot,\cdot,0)\in L^{p }_{\sP}W^{1,p
}$ and $\vf\in L^{p }_{\sF_T}(\Omega;W^{1,p })$, then $u\in
L^{p}_{\sP}C_wW^{1,p }$ and there exists a constant $C(d,d',K_1,T,L,p)$ such
that
\begin{eqnarray}\label{c4:bounded.01}
  \mathbb{E}\sup _{t\le T}\|u(t)\|_{1,p }^p  \leq Ce^{Cp }
  \mathbb{E}\bigg(\|\vf\|_{1,p }^p  + \int_{0}^T
  \|f(t,\cdot,0)\|_{1,p }^p \,dt\bigg).
\end{eqnarray}
\end{thm}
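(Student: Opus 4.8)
The plan is to regard $f(t,x,u)$ as a given inhomogeneous term and to bootstrap the $L^2$ solution furnished by Theorem \ref{thm:sl.L2} up to $L^p$ regularity using the $L^p$ part of the linear estimate in Theorem \ref{lem:l.eq.2} (with $m=m_1=1$). The first step I would record is the key Sobolev consequence of condition \emph{$(\F')$}: by the chain rule $D_x[f(t,x,u(t,x))]=f_x(t,x,u)+f_v(t,x,u)\,D_x u$, and since \emph{$(\F')$(3)} gives $\|f_x(t,x,v)-f_x(t,x,0)\|\leq L|v|$ while $f_v$ is bounded by $L$, one has the pointwise bounds $|f(t,x,u)|\leq |f(t,x,0)|+L|u|$ and $\|D_x f(t,x,u)\|\leq \|f_x(t,x,0)\|+L|u|+L\|D_x u\|$. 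Raising to the $p$-th power and integrating yields
\begin{equation*}
  \|f(t,\cdot,u)\|_{1,p}^p \leq C(p)\big(\|f(t,\cdot,0)\|_{1,p}^p + L^p\|u\|_{1,p}^p\big),
\end{equation*}
so that $f(\cdot,\cdot,u)\in L^p_{\sP}W^{1,p}$ as soon as $u\in L^p_{\sP}W^{1,p}$.

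The difficulty is that Theorem \ref{thm:sl.L2} only provides $u\in L^2_{\sP}C_wW^{1,2}$, so the right-hand side above is not yet known to be finite: there is a genuine circularity. To break it I would return to the Picard scheme of Theorem \ref{thm:sl.L2}, which for the $r$-independent equation \eqref{c4:sleq-1} reads
\begin{equation*}
  du_n = -\big[\cL u_n + \cM q_n + f(t,x,u_{n-1})\big]\,dt + q_n^k\,dW_t^k,\qquad u_n(T)=\vf,
\end{equation*}
started from $(u_0,q_0)=(0,0)$. Since $f$ here depends only on $u_{n-1}$, if inductively $u_{n-1}\in L^p_{\sP}W^{1,p}$ then the displayed Sobolev bound places $f(\cdot,\cdot,u_{n-1})$ in $L^p_{\sP}W^{1,p}$, and Theorem \ref{lem:l.eq.2} gives $u_n\in L^p_{\sP}C_wW^{1,p}$ with
\begin{equation*}
  \bE\sup_{t\leq T}\|u_n(t)\|_{1,p}^p \leq Ce^{Cp}\,\bE\Big(\|\vf\|_{1,p}^p + \int_0^T\|f(t,\cdot,0)\|_{1,p}^p\,dt + L^p\!\int_0^T\|u_{n-1}(t)\|_{1,p}^p\,dt\Big).
\end{equation*}
Applying this on each subinterval $[t_0,T]$ and iterating the resulting recursion in $n$ (a discrete Gronwall/series argument, with $u_0=0$) produces a bound on $\bE\sup_{t\leq T}\|u_n(t)\|_{1,p}^p$ that is uniform in $n$. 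In particular $\sup_n\bE\int_0^T\|u_n\|_{1,p}^p\,dt<\infty$, so by weak compactness in the reflexive space $L^p_{\sP}W^{1,p}$ a subsequence converges weakly to a limit which, by uniqueness against the already-established strong $L^2$ limit, equals $u$; lower semicontinuity of the norm then gives $u\in L^p_{\sP}W^{1,p}$.

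With $u\in L^p_{\sP}W^{1,p}$ in hand, the term $f(\cdot,\cdot,u)$ genuinely lies in $L^p_{\sP}W^{1,p}$, and one final application of the linear $L^p$ estimate to $u$ (now viewed as the solution of the linear equation with datum $F=f(\cdot,\cdot,u)$) delivers both the weak continuity $u\in L^p_{\sP}C_wW^{1,p}$ and the $\sup_t$ estimate whose right-hand side contains $L^p\bE\int_0^T\|u\|_{1,p}^p\,dt$. Writing $\Phi(t_0):=\bE\sup_{t_0\leq t\leq T}\|u(t)\|_{1,p}^p$ and using $\bE\int_{t_0}^T\|u(s)\|_{1,p}^p\,ds\leq\int_{t_0}^T\Phi(s)\,ds$, this last term is absorbed by a continuous Gronwall inequality $\Phi(t_0)\leq A(t_0)+Ce^{Cp}L^p\int_{t_0}^T\Phi(s)\,ds$, with $A(t_0)=Ce^{Cp}\bE(\|\vf\|_{1,p}^p+\int_{t_0}^T\|f(s,\cdot,0)\|_{1,p}^p\,ds)$, giving $\Phi(0)\leq A(0)\,e^{Ce^{Cp}L^pT}$; since the generic constant is permitted to depend on $d,d',K_1,T,L,p$, the extra exponential factor is folded into $C$ to yield exactly \eqref{c4:bounded.01}.

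The main obstacle is the circularity of the a priori bound together with the Gronwall absorption. The delicate points are ensuring that the iterate bounds are truly uniform in $n$ before invoking weak compactness, and reconciling the $\sup_t$ on the left with the $t$-integral on the right by localizing to the shrinking intervals $[t_0,T]$; the attendant blow-up of the constant in $p$ and $L$ is harmless precisely because $C$ in the statement already depends on these parameters.
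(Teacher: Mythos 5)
Your proposal is correct and follows essentially the same route as the paper: the pointwise consequence of $(\F')$ giving $\|f(t,\cdot,v)\|_{1,p}^p\leq C(p)\big(\|f(t,\cdot,0)\|_{1,p}^p+L^p\|v\|_{1,p}^p\big)$, the Picard iteration for \eqref{c4:sleq-1} estimated on subintervals $[t_0,T]$ via the linear $L^p$ estimate of Theorem \ref{lem:l.eq.2}, an iterated series/Gronwall bound uniform in $n$, a weak-compactness passage of that bound to $u$, and a final application of the linear estimate with datum $F=f(\cdot,\cdot,u)$. The only cosmetic differences are that the paper transfers the uniform bound to $u$ via Banach--Saks plus Fatou after weak convergence in $L^2_{\sP}W^{1,2}$, whereas you invoke reflexivity of $L^p_{\sP}W^{1,p}$ and weak lower semicontinuity directly, and your closing Gronwall absorption is superfluous, since the uniform-in-$n$ bound already controls $\mathbb{E}\int_0^T\|u(t)\|_{1,p}^p\,dt$ quantitatively, so one direct application of the linear estimate yields \eqref{c4:bounded.01}.
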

\begin{proof}
By condition $(\F')$, it is easy to see that for arbitrary $v\in W^{1,p}$£¬
\begin{eqnarray}\label{c4:bounded.12}
  \|f(t,\cdot,v)\|^p_{1,p}=\|f(t,\cdot,v)\|^p_{0,p}
  +\|f_x(t,\cdot,v)\|^p_{0,p}\leq C(p)(\|f(t,\cdot,0)\|^p_{1,p} + L^p\|v\|^p_{1,p}).
\end{eqnarray}

To avoid heavy notation, we set
$$M_1 ~= ~ \mathbb{E} \bigg(\|\vf\|_{1,p }^p  + \int_{0}^T
  \|f(t,\cdot,0)\|_{1,p }^p \,dt\bigg).$$

Similar to arguments in Theorem \ref{thm:sl.L2}, we define a recursive
sequence $\{(u_{n},q_{n})\}_{n\geq 1}$ as follows:
\begin{numcases}{}\label{c4:eq:iteration-2}
  du_{n} = - \big[ \cL u_{n} + \cM q_{n}
  + f(t,x,u_{n-1})
  \big]\,dt + q_{n}\,dW_{t}\nonumber\\
  u_{n}(T) = \vf.\nonumber
\end{numcases}{}
If $u_{n-1}\in L^{p }_{\sP}W^{1,p }$, by \eqref{c4:bounded.12}
$f(\cdot,\cdot,u_{n-1})\in L^{p }_{\sP}W^{1,p }$, thus $u_{n}\in L^{p
}_{\sP}C_wW^{1,p }$ follows immediately from Theorem \ref{lem:l.eq.2}. By
setting $u_0=0$, we know from mathematical induction that
$\{u_{n}\}_{n\geq0}\subset L^{p}_{\sP}C_wW^{1,p}$. Furthermore, by the
estimate in Theorem \ref{lem:l.eq.2} and \eqref{c4:bounded.12}, we have for
arbitrary $t\in[0,T],~n\geq 1$,
\begin{eqnarray*}
  \mathbb{E} \|u_{n}(t)\|^p_{1,p}&&\leq C\,\mathbb{E}\bigg( \|\vf\|_{1,p }^p  + \int_{t}^T
  \|f(s,\cdot,u_{n-1})\|_{1,p }^p\,ds\bigg)\nonumber\\
  &&\leq C\,\int_{t}^T
  \mathbb{E}\|u_{n-1}(s)\|_{1,p }^p\,ds+C M_1,
\end{eqnarray*}
where $C$ is independent of $n$. A simple calculation leads to
\begin{eqnarray}\label{c4:bounded.13}
\mathbb{E}\|u_{n}(t)\|^p_{1,p} \leq C M_1
\sum_{k=0}^{n-1}\frac{1}{k\,!}C^k(T-t)^k \leq C M_1 e^{C(T-t)}.
\end{eqnarray}
Hence there exist a subsequence $\{n'\}$ and a function $u\in L^{2
}_{\sP}W^{1,2}$ such that as $n'\to\infty$, $u_{n'}$ converges weakly to $u$
in $L^{2 }_{\sP}W^{1,2}$. By Banach-Saks Theorem, we can construct a sequence
$u^k$ from finite convex combinations of $u_{n'}$ such that $u^k$ and $u^k_x$
converges to $u$ and $u_x$ for a.e. $t\in[0,T]$ $x\in\R^d$ a.s.,
respectively. Due to the norm itself is convex, \eqref{c4:bounded.13} implies
$$\mathbb{E} \int_0^T \|u^k(t)\|^p_{1,p}\,dt \leq C M_1(e^{CT}-1).$$
By Fatou Lemma, it turns out that
$$\mathbb{E} \int_0^T \|u(t)\|^p_{1,p}\,dt \leq C M_1(e^{CT}-1).$$
Regarding $u$ as the solution of linear BSPDE with given coefficient
$f(t,x,u)$, by \eqref{c4:bounded.12} and Theorem \ref{lem:l.eq.2} we obtain
\eqref{c4:bounded.01}.
\end{proof}

Form the proof of Theorem \ref{c4:bounded} and Corollary 2.3 in
\cite{du-ta-zh}, it is not hard to derive the following corollary.
\begin{cor}\label{dz12}
Let conditions \emph{$(\A_1)$}, \emph{$(\PC)$}
  and \emph{$(\F')$} be satisfied.
If $f(\cdot,\cdot,0)\in L^{\infty}_{\sP}W^{1,\infty},~\vf\in
L^{\infty}_{\sF_T}(\Omega;W^{1,\infty})$, then $u\in
L^{\infty}_{\sP}W^{1,\infty}$, i.e.
\begin{eqnarray*}
\|u\|_{L^{\infty}_{\sP}W^{1,\infty}}\leq
C(d,d_0,K_1,T,L,f(\cdot,\cdot,0),\vf)\triangleq C_{\infty}.
\end{eqnarray*}
\end{cor}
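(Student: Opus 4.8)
The plan is to upgrade the finite-$p$ estimate \eqref{c4:bounded.01} to an $L^\infty$ bound by passing to the limit $p\to\infty$ in the Sobolev norm, following the standard relation $\|g\|_{0,\infty}=\lim_{p\to\infty}\|g\|_{0,p}$ for $g$ in the relevant function spaces. First I would fix $p\geq 2$ and invoke Theorem \ref{c4:bounded} under the hypotheses $(\A_1)$, $(\PC)$, $(\F')$, which gives $u\in L^p_{\sP}C_wW^{1,p}$ together with
\begin{eqnarray*}
  \mathbb{E}\sup_{t\le T}\|u(t)\|_{1,p}^p \leq Ce^{Cp}\,
  \mathbb{E}\bigg(\|\vf\|_{1,p}^p + \int_0^T\|f(t,\cdot,0)\|_{1,p}^p\,dt\bigg).
\end{eqnarray*}
The content of Corollary 2.3 in \cite{du-ta-zh} (which the remark before the statement explicitly invokes) presumably controls the $p$-dependence of the constant so that, after taking $p$-th roots, the right-hand side stays bounded as $p\to\infty$; this is the mechanism by which the $e^{Cp}$ factor is tamed. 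Under the new hypotheses $f(\cdot,\cdot,0)\in L^\infty_{\sP}W^{1,\infty}$ and $\vf\in L^\infty_{\sF_T}(\Omega;W^{1,\infty})$, these quantities are uniformly bounded in $p$, so the whole scheme applies.

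The key steps, in order, are as follows. First, observe that the $L^\infty$ data embed into every $L^p$ space on the bounded domain $\Omega\times[0,T]$ (with the convention that finiteness of measure lets $L^\infty\hookrightarrow L^p$), so for each finite $p$ the estimate \eqref{c4:bounded.01} holds with data norms bounded by their $L^\infty$ counterparts up to constants depending on $T$ and the measures involved. Second, take the $p$-th root of \eqref{c4:bounded.01}; the factor $(Ce^{Cp})^{1/p}$ converges to a finite limit (or stays bounded) as $p\to\infty$, yielding a bound on $\big(\mathbb{E}\sup_{t\le T}\|u(t)\|_{1,p}^p\big)^{1/p}$ that is uniform in $p$. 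Third, pass to the limit using that the $L^p(\Omega\times[0,T])$-norm of the map $(\omega,t)\mapsto\|u(\omega,t)\|_{1,p}$, after suitable interpretation, controls the essential supremum, so monotone or Fatou-type arguments give $\operatorname*{ess\,sup}_{(\omega,t)}\|u(\omega,t)\|_{1,\infty}\leq C_\infty$, which is exactly $u\in L^\infty_{\sP}W^{1,\infty}$.

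The main obstacle I anticipate is the double limit entangling the integration exponent $p$ in $\mathbb{E}\int_0^T\|u\|_{1,p}^p$ with the Sobolev exponent $p$ in the spatial norm $\|\cdot\|_{1,p}$: these must both be sent to infinity simultaneously, and one must be careful that $\lim_{p\to\infty}\|g\|_{0,p}=\|g\|_{0,\infty}$ holds for the spatial norm while the outer $L^p(\Omega\times[0,T])$ behaves correctly. The clean way around this is to exploit the precise form of Corollary 2.3 in \cite{du-ta-zh}, which is presumably tailored to produce exactly such an $L^\infty$-in-all-variables bound from the family of $L^p$ estimates; invoking it as a black box reduces the present corollary to verifying that the data hypotheses $f(\cdot,\cdot,0)\in L^\infty_{\sP}W^{1,\infty}$ and $\vf\in L^\infty_{\sF_T}(\Omega;W^{1,\infty})$ feed into its assumptions, together with the semi-linear reduction already carried out in the proof of Theorem \ref{c4:bounded} (where $u$ is treated as the solution of a linear BSPDE with the known coefficient $f(t,x,u)$). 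I would therefore present the argument as: reduce to the linear $L^\infty$ estimate of \cite{du-ta-zh} via the bound \eqref{c4:bounded.12} on $\|f(t,\cdot,u)\|_{1,\infty}$ in terms of $\|f(t,\cdot,0)\|_{1,\infty}$ and $\|u\|_{1,\infty}$, then close the resulting self-referential estimate by absorbing the $\|u\|_{1,\infty}$ term, taking the constant $C_\infty$ to depend on $d,d_0,K_1,T,L$ and the data.
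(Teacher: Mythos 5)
The first half of your plan --- sending $p\to\infty$ in \eqref{c4:bounded.01} --- fails at the very first step. The Sobolev norms $\|\cdot\|_{1,p}$ are taken over $x\in\R^d$, which has \emph{infinite} Lebesgue measure; the ``bounded domain'' you invoke is $\Omega\times[0,T]$, but that is the wrong measure space for this embedding. Under the corollary's hypotheses $\vf\in L^{\infty}_{\sF_T}(\Omega;W^{1,\infty})$ and $f(\cdot,\cdot,0)\in L^{\infty}_{\sP}W^{1,\infty}$, the finite-$p$ data norms $\|\vf\|_{1,p}$ and $\|f(t,\cdot,0)\|_{1,p}$ need not be finite at all (take $\vf\equiv 1$), so Theorem \ref{c4:bounded} does not apply and the right-hand side of \eqref{c4:bounded.01} can be $+\infty$ for every $p$. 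Moreover, the constant in \eqref{c4:bounded.01} is stated to depend on $p$, so the taming $(Ce^{Cp})^{1/p}\to e^{C}$ is not available from that estimate; it is the \emph{linear} estimate of Theorem \ref{lem:l.eq.2} whose constant is $p$-free, and it is precisely there, in \cite{du-ta-zh}, that the $p\to\infty$ passage is carried out once and for all, producing Corollary 2.3 --- the linear $L^\infty$ result that should indeed be used as a black box.

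Your fallback route --- reduce to that linear $L^\infty$ estimate by treating $f(t,x,u)$ as a known free term and using $\|f(t,\cdot,v)\|_{1,\infty}\leq \|f(t,\cdot,0)\|_{1,\infty}+C\,L\,\|v\|_{1,\infty}$ --- is exactly the paper's intended mechanism, but your closure is flawed: you cannot ``absorb'' the $\|u\|_{1,\infty}$ term, both because the coefficient in front of it (of size $CLT$, with $C$ the linear-estimate constant) is in no way small, and, more seriously, because absorption presupposes that $\|u\|_{L^{\infty}_{\sP}W^{1,\infty}}$ is a priori finite, which is precisely the assertion being proved; the self-referential estimate is circular. The paper's argument instead runs the Picard iteration from the proof of Theorem \ref{c4:bounded}: each iterate $u_n$ solves a linear BSPDE with free term $f(t,x,u_{n-1})$, so $\|u_n\|_{L^\infty_\sP W^{1,\infty}}$ is finite by induction via Corollary 2.3 of \cite{du-ta-zh}, and applying that estimate on $[t,T]$ gives a recursion of the form $\|u_n(t)\|_{1,\infty}\leq C\big(\|\vf\|_{1,\infty}+\int_t^T\|f(s,\cdot,0)\|_{1,\infty}\,ds\big)+CL\int_t^T\|u_{n-1}(s)\|_{1,\infty}\,ds$ with $C$ independent of $n$; iterating and summing the series yields a uniform-in-$n$ bound of the form $CMe^{C(T-t)}$, exactly parallel to \eqref{c4:bounded.13}, after which the bound passes to the limit $u$ (via a.e.\ convergence of convex combinations and lower semicontinuity, as in the proof of Theorem \ref{c4:bounded}). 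Replacing your absorption step by this induction-plus-Gronwall-type summation turns your second approach into the paper's proof.
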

With the help of Sobolev's embedding theorem, it is not hard to deduce the
corollary below.
\begin{cor}\label{dz111}
Under the conditions in Theorem \ref{c4:bounded} with $p>2$ replaced by
$p>d$, $u(t,x)$ is jointly continuous on $(t,x)$ a.s.
\end{cor}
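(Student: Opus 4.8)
The plan is to combine the regularity $u \in L^{p}_{\sP}C_wW^{1,p}$ furnished by Theorem \ref{c4:bounded} with the Sobolev embedding theorem, exploiting the strengthened hypothesis $p>d$. Since $p>d$, the embedding $W^{1,p}(\R^d)\hookrightarrow C^{0,\alpha}(\R^d)$ with $\alpha=1-d/p\in(0,1)$ is continuous, so that $\|v\|_{C^{0,\alpha}}\leq C\|v\|_{1,p}$ for every $v\in W^{1,p}$. In particular, for a.s. $\omega$ and each fixed $t$, the function $u(t,\cdot)$ admits a H\"older continuous representative in $x$, and henceforth I would work with this representative.

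First I would fix $\omega$ in the full-measure set on which $t\mapsto u(t)$ is weakly continuous into $W^{1,p}$ and on which $\sup_{t\leq T}\|u(t)\|_{1,p}<\infty$; the latter holds almost surely because $\mathbb{E}\sup_{t\leq T}\|u(t)\|_{1,p}^{p}<\infty$ by \eqref{c4:bounded.01}. The remaining task is to upgrade the weak continuity in $t$ to genuine continuity of the map $t\mapsto u(t,\cdot)$, which is the crux of the argument: weak continuity together with the bound $\|u(t)-u(s)\|_{C^{0,\alpha}}\leq C\|u(t)-u(s)\|_{1,p}$ is not by itself enough, since weak convergence in $W^{1,p}$ does not force the $W^{1,p}$-norm of the difference to vanish.

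To resolve this I would localize to balls $B_R$ and invoke compactness. For $t_n\to t$, weak continuity gives $u(t_n)\rightharpoonup u(t)$ in $W^{1,p}(\R^d)$, and restriction to $B_R$, being a bounded linear map, preserves this as weak convergence in $W^{1,p}(B_R)$. Since $p>d$, the Rellich--Kondrachov embedding $W^{1,p}(B_R)\hookrightarrow\hookrightarrow C(\bar{B}_R)$ is compact; hence the bounded, weakly convergent sequence $\{u(t_n)\}$ has, along every subsequence, a sub-subsequence converging strongly in $C(\bar{B}_R)$, necessarily to the weak limit $u(t)$. By uniqueness of the limit the whole sequence converges, so $t\mapsto u(t,\cdot)$ is continuous from $[0,T]$ into $C(\bar{B}_R)$.

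Finally, continuity of $t\mapsto u(t,\cdot)$ in $C(\bar{B}_R)$ combined with the $x$-continuity of each slice yields joint continuity on $[0,T]\times\bar{B}_R$ via the triangle inequality $|u(t_n,x_n)-u(t,x)|\leq \|u(t_n)-u(t)\|_{C(\bar{B}_R)}+|u(t,x_n)-u(t,x)|$. Since $R$ is arbitrary, $u$ is jointly continuous on $[0,T]\times\R^d$ for a.s. $\omega$, as claimed. The main obstacle is precisely the weak-to-strong upgrade in the time variable; the $x$-regularity and the passage to joint continuity are direct applications of Sobolev-type embeddings.
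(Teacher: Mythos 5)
Your proof is correct and takes exactly the route the paper itself indicates: the paper gives no argument beyond the remark that the corollary follows ``with the help of Sobolev's embedding theorem.'' Your write-up supplies the details this remark leaves implicit --- in particular the genuinely needed upgrade from weak continuity of $t\mapsto u(t)$ in $W^{1,p}$ (all that $u\in L^{p}_{\sP}C_{w}W^{1,p}$ provides) to strong continuity in $C(\bar{B}_R)$ via the compact embedding $W^{1,p}(B_R)\hookrightarrow C(\bar{B}_R)$ for $p>d$ --- and then correctly assembles joint continuity on $[0,T]\times\R^d$ by exhausting with balls.
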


Based on Theorem \ref{c4:bounded}, we explore the regularity of solution to
BSPDE \eqref{c4:sleq-1}.
\begin{thm}\label{c4:bounded2} We assume that
\begin{description}
\item[(1)] conditions \emph{$(\A_2)$} and \emph{$(\PC)$}
  hold, and $\vf\in L^2_{\sF_T}(\Omega;W^{2,2})\cap
L^\infty_{\sF_T}(\Omega;W^{1,\infty})$;
\item[(2)] for arbitrary $(\omega,t,x,v)$, $f_x,f_v,f_{xx},f_{xv},f_{vv}$ exist;
\item[(3)] $f(\cdot,\cdot,0)\in L^2_{\sP}W^{2,2}\cap L^\infty_{\sP}W^{1,\infty}$;
\item[(4)] $f_v,f_{xv},f_{vv}$ are bounded by $L$;
\item[(5)] for arbitrary $(\omega,t,x,v)$, $|f_{xx}(t,x,v)|\leq |f_{xx}(t,x,0)| + L|v|$.
\end{description}
Then \eqref{c4:sleq-1} has a unique generalized
  solution $(u,q)$ satisfying
$$u \in L^2_{\sP}C_wW^{2,2}\cap L^\infty_{\sP}W^{1,\infty}\ {\rm and}\
q +u_x\sigma  \in L^2_{\sP}W^{2,2}.$$
\end{thm}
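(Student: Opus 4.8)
The plan is to lean on what is already in place. Conditions (2) and (4) give $|f(t,x,v_1)-f(t,x,v_2)|\le L|v_1-v_2|$ and $|f_x(t,x,v_1)-f_x(t,x,v_2)|\le L|v_1-v_2|$, so hypothesis $(\F')$ holds, while $f(\cdot,\cdot,0)\in L^2_\sP W^{2,2}\subset L^2_\sP W^{1,2}$. Hence Theorem \ref{thm:sl.L2} already yields existence and uniqueness of a generalized solution $(u,q)$ with $u\in L^2_\sP C_wW^{1,2}$, and Corollary \ref{dz12} gives $u\in L^\infty_\sP W^{1,\infty}$ with $\|u\|_{L^\infty_\sP W^{1,\infty}}\le C_\infty$. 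Thus the only genuinely new content is the second-order regularity $u\in L^2_\sP C_wW^{2,2}$ and $q+u_x\sigma\in L^2_\sP W^{2,2}$.

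The key computation is a $W^{2,2}$-bound on the composite source $f(t,\cdot,u)$. By the chain rule, $\partial_{x^i}f(t,x,u)=f_{x^i}+f_vu_{x^i}$ and $\partial_{x^ix^j}f(t,x,u)=f_{x^ix^j}+f_{x^iv}u_{x^j}+f_{x^jv}u_{x^i}+f_{vv}u_{x^i}u_{x^j}+f_vu_{x^ix^j}$. Using that $f_v,f_{xv},f_{vv}$ are bounded by $L$ (condition (4)) and that $|f_{xx}(t,x,v)|\le|f_{xx}(t,x,0)|+L|v|$ (condition (5)), every resulting term is linear in the second derivatives of $u$ except the quadratic gradient term $f_{vv}u_{x^i}u_{x^j}$. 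This term is the crux: it involves $\int|u_x|^4\,dx$, which $W^{2,2}$ alone does not control in high dimension. It is precisely here that the $L^\infty_\sP W^{1,\infty}$ bound of Corollary \ref{dz12} intervenes, through
$$\int_{\R^d}|f_{vv}|^2|u_x|^4\,dx\le L^2\|u_x\|_{C^0}^2\int_{\R^d}|u_x|^2\,dx\le L^2C_\infty^2\|u\|_{2,2}^2,$$
yielding an estimate of the form $\|f(t,\cdot,u)\|_{2,2}^2\le C\big(\|f(t,\cdot,0)\|_{2,2}^2+(1+C_\infty^2)\|u\|_{2,2}^2\big)$, valid with $u$ replaced by any $W^{2,2}\cap W^{1,\infty}$ function.

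Equipped with this bound, I would run the Picard iteration $\{(u_n,q_n)\}$ of \eqref{c4:eq:iteration-2}, which is the same sequence appearing in Theorems \ref{thm:sl.L2} and \ref{c4:bounded}; in particular it converges to $u$ strongly in $L^2_\sP W^{0,2}$, and by the recursive estimate behind Corollary \ref{dz12}---combining the linear $L^\infty$-estimate (Corollary 2.3 in \cite{du-ta-zh}) with the factorial/Gr\"onwall argument used for \eqref{c4:bounded.13}---the iterates satisfy a uniform bound $\sup_n\|u_n\|_{L^\infty_\sP W^{1,\infty}}\le C_\infty$. Applying the $W^{2,2}$-version \eqref{c4:est:lambda-2} of Lemma \ref{lem:lambda.est} to \eqref{c4:eq:iteration-2} with source $f(t,\cdot,u_{n-1})$ and inserting the composite bound above, I choose $\lambda$ so large (depending on $C_\infty$) that the coefficient $\tfrac{2C(1+C_\infty^2)}{\lambda-C-1}$ of $\bE\int_0^Te^{\lambda t}\|u_{n-1}\|_{2,2}^2\,dt$ is at most $\tfrac12$. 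An induction identical in structure to the derivation of \eqref{proof:002} then gives a bound on $\bE\int_0^Te^{\lambda t}(\|u_n\|_{2,2}^2+\|\wh q_n\|_{2,2}^2)\,dt$ that is uniform in $n$ and controlled by $\bE(\|\vf\|_{2,2}^2+\int_0^T\|f(t,\cdot,0)\|_{2,2}^2\,dt)$. By weak compactness a subsequence of $(u_{n'},\wh q_{n'})$ converges weakly in $L^2_\sP W^{2,2}\times L^2_\sP W^{2,2}(d')$, and since this sequence already converges strongly to $(u,\wh q)$ in $L^2_\sP W^{0,2}\times L^2_\sP W^{0,2}(d')$, the weak limit must be $(u,\wh q)$; hence $u\in L^2_\sP W^{2,2}$ and $\wh q=q+u_x\sigma\in L^2_\sP W^{2,2}$.

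Finally, to promote $u\in L^2_\sP W^{2,2}$ to $u\in L^2_\sP C_wW^{2,2}$, I regard $u$ as the solution of the linear BSPDE \eqref{c4:sleq-1} with the now-fixed source $F(t,x):=f(t,x,u(t,x))$. The composite bound, applied to the established $u\in L^2_\sP W^{2,2}\cap L^\infty_\sP W^{1,\infty}$, shows $F\in L^2_\sP W^{2,2}$---a step that is no longer circular, since $u_{xx}\in L^2$ is by then known---so Theorem \ref{lem:l.eq.2} with $m=2$ delivers $u\in L^2_\sP C_wW^{2,2}$ together with its estimate. I expect the main obstacle to be exactly the quadratic gradient term $f_{vv}|u_x|^2$ in the second derivative of the composite source: without the $L^\infty_\sP W^{1,\infty}$ regularity of Corollary \ref{dz12} this term cannot be absorbed into the $W^{2,2}$ estimate, and securing a \emph{uniform-in-$n$} $W^{1,\infty}$ bound on the iterates (rather than merely on the limit) is the delicate point that makes the induction close.
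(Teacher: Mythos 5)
Your proposal is correct in its essentials, but it takes a genuinely different route from the paper. The paper does not iterate: after securing $u\in L^2_{\sP}C_wW^{1,2}\cap L^\infty_{\sP}W^{1,\infty}$ exactly as you do, it regularizes the \emph{equation} rather than the nonlinearity, considering the super-parabolic family with $(\delta\Delta+\cL)$ in place of $\cL$. The smoothing term buys genuine extra regularity for the approximants — by the non-degenerate linear theory of \cite{du-me}, $(u^\delta,q^\delta)\in L^2_{\sP}W^{3,2}\times L^2_{\sP}W^{2,2}$ — so the $W^{2,2}$ computations behind \eqref{c4:est:lambda-2} are rigorous for each $\delta$; the composite-source bound (identical to yours, including the crucial absorption of the quadratic term $f_{vv}u_xu_x$ via $|u^\delta_x|\le C_\infty$) then yields a bound uniform in $\delta$, and the paper identifies the weak limit by proving $\{u^{\delta_n}\}$ is Cauchy in $L^2_{\sP}W^{0,2}$ via \eqref{est:l.eq.2} and Gronwall applied to the difference equation, whose extra term $(\delta_n-\delta_m)\Delta u^{\delta_m}$ is controlled by the uniform $W^{2,2}$ bound. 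Your Picard route buys a cheaper identification step — the iterates already converge strongly in $L^2_{\sP}W^{0,2}$ by Theorem \ref{thm:sl.L2}, so no separate Cauchy argument is needed — and it reuses \eqref{c4:est:lambda-2} as a black box (legitimately, since Remark 2.2(ii) asserts it for degenerate linear equations; the viscosity is merely hidden inside its proof, and Theorem \ref{lem:l.eq.2} with $m=2$ keeps each iterate in $L^2_{\sP}C_wW^{2,2}$ so the estimate applies at every step). The one point where your argument needs more than the paper's stated results literally provide is the uniform-in-$n$ bound $\sup_n\|u_n\|_{L^\infty_{\sP}W^{1,\infty}}\le C_\infty$: Corollary \ref{dz12} as stated bounds only the limit solution, so you must rerun its factorial recursion at the level of the iterates, which in turn presumes that the $L^\infty$ estimate of Corollary 2.3 in \cite{du-ta-zh} has the time-integral structure that makes the recursion close — you correctly flag this as the delicate point, and it is the exact analogue of the paper's (equally implicit) application of Corollary \ref{dz12} to $u^\delta$ uniformly in $\delta$. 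Your explicit final bootstrap to $u\in L^2_{\sP}C_wW^{2,2}$ via the linear theory with frozen source $f(t,x,u)$ is also a welcome step that the paper leaves tacit.
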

\begin{proof}
First of all, our assumptions satisfy the conditions in Theorem
\ref{c4:bounded} and Corollary \ref{dz12}, thus \eqref{c4:sleq-1} has a
unique solution $(u,q)$ satisfying
\begin{eqnarray*}
u \in L^2_{\sP}C_wW^{1,2}\cap L^\infty_{\sP}W^{1,\infty}\ {\rm and}\ q
+u_x\sigma  \in L^2_{\sP}W^{1,2}.
\end{eqnarray*}
To get a better regularity, for arbitrary $\delta>0$, we consider the
non-degenerate BSPDE below:
\begin{numcases}{}\label{c4:bounded.08}
    d u^{\delta} = -\big[(\delta \Delta + \cL) u^{\delta} + \cM q^{\delta}
    + f(t,x,u^{\delta})\big]\,dt + q^{\delta} \,dW_t\nonumber\\
    u^{\delta}(T) = \vf.\nonumber
\end{numcases}
By Theorem \ref{thm:sl.L2} we know that above BSPDE has a unique solution
$(u^{\delta},q^{\delta}) \in L^2_{\sP}W^{1,2}\times L^2_{\sP}W^{1,2}$, which
together with condition (4) leads to a fact that $f(t,x,u^{\delta})\in
L^2_{\sP}W^{1,2}$. Regarding $f(t,x,u^{\delta})$ as a given coefficient and
using Theorem 2.3 in \cite{du-me} for non-degenerate linear BSPDE, we can get
a better regularity of solution, i.e. $(u^{\delta},q^{\delta}) \in
L^2_{\sP}W^{3,2}\times L^2_{\sP}W^{2,2}$. Then $q^{\delta}
+u^{\delta}_x\sigma \in L^2_{\sP}W^{2,2}$, and by \eqref{c4:est:lambda-2}
there exists a positive constant $C_2(d,d',K_2,T)$ such that for any positive
number $\lambda >
  C_2+1$,
  \begin{eqnarray}\label{c4:bounded.05}
    &&\mathbb{E}\int_{0}^{T} e^{\lambda t}\Big( \|u^{\delta}\|_{2,2}^{2}
    + \|q^{\delta}+u^{\delta}_{x}\sigma\|_{2,2}^{2} \Big) \,dt\nonumber\\
    &&\leq 2 e^{\lambda T}\mathbb{E} \|\vf\|_{2,2}^{2}
    + \frac{2}{\lambda-C_2-1} \mathbb{E}\int_{0}^{T}
    e^{\lambda t}\|f(t,\cdot,u^{\delta})\|_{2,2}^{2}\,dt.
\end{eqnarray}
Also, by Corollary \ref{dz12} we have $|u_x|\leq C_\infty$, so it follows
from conditions (2)(4)(5) that
\begin{eqnarray*}
  &&|f(t,x,u^{\delta})|~\leq~ |f(t,x,0)| + L|u^{\delta}|,\\
  &&|\{f(t,x,u^{\delta})\}_x|~\leq~ |f_x(t,x,0)| + L(|u^{\delta}|+|u^{\delta}_x|),\\
  &&|\{f(t,x,u^{\delta})\}_{xx}|~\leq~|f_{xx}(t,x,u^{\delta})| + 2|f_{xv}(t,x,u^{\delta})|\cdot|u^{\delta}_x|+ |f_{vv}(t,x,u^{\delta})|\cdot|u^{\delta}_x|^2\\
  &&\ \ \ \ \ \ \ \ \ \ \ \ \ \ \ \ \ \ \ \ \ \ \ \ \ \ + |f_v(t,x,u^{\delta})|\cdot|u^{\delta}_{xx}|\\
  &&\ \ \ \ \ \ \ \ \ \ \ \ \ \ \ \ \ \ \ \ \ \ \leq~|f_{xx}(t,x,0)| + L|u^{\delta}| + (2 + C_\infty)L |u^{\delta}_x| + L|u^{\delta}_{xx}|.
\end{eqnarray*}
Hence,
$$\|f(t,\cdot,u^{\delta})\|_{2,2}^{2}\leq C(L,C_\infty)\Big(\|f(t,\cdot,0)\|_{2,2}^{2}
+ \|u^{\delta}\|_{2,2}^2\Big).$$ Putting this estimate into
\eqref{c4:bounded.05}, we immediately get
  \begin{eqnarray*}\begin{split}
    &\mathbb{E}\int_{0}^{T} e^{\lambda t}\Big( \|u^{\delta}\|_{2,2}^{2}
    + \|q^{\delta}+u^{\delta}_{x}\sigma\|_{2,2}^{2} \Big) \,dt\\
    &\leq 2 e^{\lambda T}\mathbb{E} \|\vf\|_{2,2}^{2}
    + \frac{2\,C(L,C_\infty)}{\lambda-C_2-1} \mathbb{E}\int_{0}^{T}
    e^{\lambda t}\Big(\|f(t,\cdot,0)\|_{2,2}^{2}
    + \|u^{\delta}\|_{2,2}^2 \Big)\,dt.
  \end{split}\end{eqnarray*}
Then taking $\lambda = 4\,C(L,C_\infty) + C_2 +1$ in above and setting
$$M_2~=~\mathbb{E} \bigg(\|\vf\|_{2,2}^{2}
    + \int_{0}^{T} \|f(t,\cdot,0)\|_{2,2}^{2} \,dt\bigg),$$
we obtain the uniformly bounded estimate for
$(u^{\delta},q^{\delta}+u^{\delta}_{x}\sigma)$ in $L^2_{\sP}W^{2,2}$, i.e.
\begin{eqnarray}\label{c4:bounded.09}\begin{split}
    \mathbb{E}\int_{0}^{T} \Big( \|u^{\delta}\|_{2,2}^{2}
    + \|q^{\delta}+u^{\delta}_{x}\sigma\|_{2,2}^{2} \Big) \,dt
    \leq 4 e^{\lambda T}M_2,
  \end{split}\end{eqnarray}
where $\lambda$ is independent of $\delta$. So we can get a sequence
$\{\delta_n\}\downarrow 0$ and $(\wh{u},\wh{r}) \in L^2_{\sP}W^{2,2}\times
L^2_{\sP}W^{2,2}$ such that
$(u^n,r^n)\triangleq(u^{\delta_n},q^{\delta_n}+u^{\delta_n}_{x}\sigma)$
converges weakly to $(\wh{u},\wh{r})$ in $L^2_{\sP}W^{2,2}\times
L^2_{\sP}W^{2,2}$. The weak convergence of $u^{\delta_n}$ to $\wh{u}$ in
$L^2_{\sP}W^{2,2}$ also implies the weak convergence of
$u^{\delta_n}_{x}\sigma$ to $\wh{u}_{x}\sigma$ in $L^2_{\sP}W^{1,2}$. Hence
$q^{\delta_n} = r^n - u^{\delta_n}_{x}\sigma$ converges weakly to
$\widehat{q}\triangleq\widehat{r}-\widehat{u}_x\sigma$ in $L^2_{\sP}W^{1,2}$.


Next we show that $\{u^{\delta_n}\}$ is a Cauchy sequence in
$L^2_{\sP}W^{0,2}$. If so, the strong convergence of $u^{\delta_n}$ to
$\wh{u}$ in $L^2_{\sP}W^{0,2}$ follows and it is easy to see that
$(\wh{u},\wh{q})$ is the unique solution to \eqref{c4:sleq-1} referring to
the arguments as in Theorem \ref{thm:sl.L2}.

To prove that $\{u^{\delta_n}\}$ is a Cauchy sequence in $L^2_{\sP}W^{0,2}$,
we set
$$u^{n,m} = u^{\delta_n}-u^{\delta_m},\ \ \ q^{n,m} = q^{\delta_n}-q^{\delta_m}.$$
Obviously, $(u^{n,m},q^{n,m})$ satisfies equations as follows:
\begin{numcases}{}\label{c4:bounded.10}
  d u^{n,m} = -\big\{(\,\delta_n \,\Delta + \cL)\, u^{n,m} + \cM q^{n,m}
  + f(t,x,u^{\delta_n})-f(t,x,u^{\delta_m}) \nonumber\\
  ~~~~~~~~~~~~~~~~+ (\delta_n - \delta_m)\,\Delta u^{\delta_m} \big\}\,dt
  + q^{n,m} \,dW_t\nonumber\\
  u^{n,m}(T) = 0.\nonumber
\end{numcases}
By \eqref{est:l.eq.2} in the case of $m_1=0$ and \eqref{c4:bounded.09}, for
$t\in[0,T]$, we have
\begin{eqnarray*}
  &&\mathbb{E} \|u^{n,m}(t)\|_{0,2}^2 \\
  &&\leq~ C\,
  \mathbb{E}\bigg\{\int_t^T \|f(s,\cdot,u^{n})-f(s,\cdot,u^{m})\|_{0,2}^2\,ds
  +(\delta_n - \delta_m)\int_s^T \|\Delta u^{m}(s)\|_{0,2}^2\,ds\bigg\}\\
  &&\leq~ C\,
  \mathbb{E}\int_t^T \|u^{n,m}(s)\|_{0,2}^2\,ds
  +(\delta_n - \delta_m)\,CM_2,
\end{eqnarray*}
where the constant $C$ is independent of $\delta_n, \delta_m$. Therefore, we
can apply Gronwall inequality and take $n,m\rrow\infty$ to deduce that
$\{u^n\}$ is a Cauchy sequence in $L^2_{\sP}W^{0,2}$. The proof of Theorem
\ref{c4:bounded2} is complete.
\end{proof}

\begin{rmk}
(i) Theorems \ref{c4:bounded} and \ref{c4:bounded2} improve much in many aspects in comparison with Theorems 3.2 and 5.1 in Hu-Ma-Yong \cite{hu-ma-yo}. For example, our result includes multi-dimensional equation and the coefficients $\sigma,\nu$ in BSPDE can depend on $x$ (actually, all the coefficients in our setting are a function of $(\omega,t,x)$). Also the regularity condition of coefficients in Theorem \ref{c4:bounded2} is weaker than that in Theorem 5.1 in \cite{hu-ma-yo}. Needless to say, all these improvements are not trivial.\\
(ii) Denote by $D_x^iD_v^jf,\ i,j\in\mathbb{Z}^+$ the derivative of $f$ which
is $i$ order with respect to $x$ and $j$ order with respect to $v$. For
$m\geq 1$, if we assume
\begin{description}
\item[(1)]conditions \emph{$(\A_m)$} and \emph{$(\PC)$}
 hold, and $\vf\in L^2_{\sF_T}(\Omega;W^{m,2})\cap
L^\infty_{\sF_T}(\Omega;W^{m-1,\infty})$;
\item[(2)]for arbitrary $(\omega,t,x,v)$, all $D_x^iD_v^jf$ exist, where $0\leq i,j\leq m$ and $i+j>0$;
\item[(3)]$f(\cdot,\cdot,0)\in L^2_{\sP}W^{m,2}\cap L^\infty_{\sP}W^{m-1,\infty}$;
\item[(4)]all $D_x^iD_v^jf$ are bounded by $L$, where $0\leq i\leq m-1, 0\leq j\leq m$ and $i+j>0$;
\item[(5)]for arbitrary $(\omega,t,x,v)$, $|D^m_xf(t,x,v)|\leq |D^m_xf(t,x,0)| + L|v|$.
\end{description}
Then from the argument of Theorem \ref{c4:bounded2}, it is not hard to prove
that \eqref{c4:sleq-1} has a unique generalized
  solution $(u,q)$ satisfying
$$u \in L^2_{\sP}C_wW^{m,2}\cap L^\infty_{\sP}W^{m-1,\infty}\ {\rm and}\
q +u_x\sigma  \in L^2_{\sP}W^{m,2}.$$

\end{rmk}

\section{Connection between BSPDEs and FBSDEs}
\setcounter{equation}{0}

\ \ \ \ In this section, we study the connection between semi-linear BSPDEs
and FBSDEs. This kind of connection is established in a non-Markov frame and
can be regarded as an extension of Feynman-Kac formula for semi-linear PDEs
and BSDEs (c.f. \cite{pa-pe, pe}).

First give a BSDE whose coefficients may be non-Markovian:
\begin{numcases}{}\label{eq:sde}
  X^{t,x}_s = x + \int_{t}^{s} b(r,X^{t,x}_r)dr + \int_{t}^{s} \sigma(r,X^{t,x}_r)dW_r,\ \ \ s\geq t,\nonumber\\
  X^{t,x}_s=x,\ \ \ 0\leq s<t.
\end{numcases}
where $W_s = (W^{1}_s,\cdots,W^{d'}_s)^{*}$. We always assume that $b,\sigma$
satisfy $(\A_1)$. The BSDE coupled with above forward SDE is usually called
FBSDE:
\begin{eqnarray}\label{eq:bsde}
  Y^{t,x}_s = \vf(X^{t,x}_T) + \int_{s}^{T} f(r,X^{t,x}_r,Y^{t,x}_r) dr - \int_{s}^{T} Z^{t,x}_r dW_r.
\end{eqnarray}
\begin{rmk}\label{29} (i) Given $p >d$, note that
\begin{eqnarray*}
\mathbb{E}\int_0^T|f(s,X^{t,x}_s,0)|^pds\leq
\mathbb{E}\int_0^T\|f(s,\cdot,0)\|_{L^\infty}^pds\leq
C\mathbb{E}\int_0^T\|f(s,\cdot,0)\|_{W^{1,p}}^pds.
\end{eqnarray*}
Hence, if $f(\cdot,\cdot,0)\in L^{p}_{\sP}W^{1,p}$ , $\vf\in
L^{p}_{\sF_T}(\Omega;W^{1,p})$, and for any $\omega\in\Omega$, $s\in[0,T]$,
$f(s,x,y)$ satisfies the uniformly Lipschitz condition with respect to $y$,
we can use It$\hat { o}$ formula and the localization procedure to prove that
there exists a unique $(Y^{t,x}_s,Z^{t,x}_s)_{s\in[t,T]}$ which satisfies the
form \eqref{eq:bsde} and
\begin{eqnarray}\label{dz42}
\mathbb{E}\sup_{s\in[t,T]}\int_t^T|Y^{t,x}_s|^pds+\mathbb{E}\int_t^T|Y^{t,x}_s|^{p-2}|Z^{t,x}_s|^2ds+\Big(\mathbb{E}\int_t^T|Z^{t,x}_s|^2ds\Big)^{p\over2}<\infty.
\end{eqnarray}
One can refer to e.g. Lemma 4.3 in \cite{qzh} for the localization procedure, and in order to save the space we leave out the localization procedure arguments in this section.\\
(ii) For $s\in[0,t]$, \eqref{eq:bsde} is equivalent to the following FBSDE:
\begin{eqnarray*}\label{dz41}
Y_s^{x}=Y_t^{t,x}+\int_{s}^{t}f(r,x,Y^{x}_r)dr-\int_{s}^{t} Z^{x}_rdW_r.
\end{eqnarray*}
As stated in (i), in view of $Y_t^{t,x}\in L^{p}_{\sF_T}(\Omega;L^p)$, the
above equation has a unique solution $(Y^{x}_s,Z^{x}_s)_{s\in[0,t]}$. To
unify the notation, we define
$({Y}_s^{t,x},{Z}_s^{t,x})=({Y}_s^{x},{Z}_s^{x})$ when $s\in[0,t)$.
\end{rmk}

Our purpose is to investigate the connection between FBSDE \eqref{eq:bsde}
and the following BSPDE:
\begin{numcases}{}\label{eq:bspde}
  du = -\big[\alpha^{ij}u_{x^ix^j} + b^{i}u_{x^i}
  + \sigma^{ik}q^{k}_{x^i} + f(t,x,u
  ) \big]dt + q^{k}dW^{k}_t\nonumber\\
  u(T,x) = \vf(x),~~~~x\in \R^{d},
\end{numcases}
where $\alpha^{ij} = \frac{1}{2}\sigma^{ik}\sigma^{jk}$.

We begin with the linear case that $f(t,x,y,z) = c(t,x)y + \nu^{k}(t,x)z^{k}
+ F(t,x)$ and in this case FBSDE has a form like below:
\begin{eqnarray}\label{dz20}
  Y^{t,x}_s = \vf(X^{t,x}_T) + \int_{s}^{T}\big[ c(r,X^{t,x}_r)Y^{t,x}_r + \nu(r,X^{t,x}_r)Z^{t,x}_r + F(r,X^{t,x}_r) \big] dr - \int_{s}^{T} Z^{t,x}_r dW_r.\ \ \ \
\end{eqnarray}
The corresponding linear BSPDE is as follows:
\begin{numcases}{}\label{dz17}
  du = - \big[ \cL u + \cM q + F \big]dt
  + q^{k} dW^{k}_{t}\nonumber\\
u(T,x) = \vf(x),\quad x\in \R^{d}.
\end{numcases}
Referring to Lemma 4.5.6 in \cite{ku}, we first give a useful lemma.
\begin{lem}\label{25} Under condition \emph{$(\A_1)$}, for $p\geq1$, $t',t\in[0,T]$,
the stochastic flow defined by (\ref{eq:sde}) satisfies
\begin{eqnarray*}
\mathbb{E}\sup_{s\in[0,T]}|X_s^{t',x'}-X_s^{t,x}|^{2p}\leq
C(p,T)\Big(1+|x|^{2p}+|x'|^{2p}\Big)\Big(|x'-x|^{2p}+|t'-t|^{p}\Big)\ \ \
\rm{a.s.}
\end{eqnarray*}
\end{lem}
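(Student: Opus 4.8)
The plan is to reduce the joint estimate to separate control of the spatial increment and of the temporal increment, and then to combine them by the triangle inequality. Throughout I use that $(\A_1)$ makes $b$ and $\sigma$ both bounded by $K_1$ and Lipschitz in $x$, uniformly in $(\omega,r)$, so that the Burkholder-Davis-Gundy (BDG) inequality and Gronwall's inequality are the only analytic tools required. As a preliminary moment bound, a standard BDG-plus-Gronwall argument applied to \eqref{eq:sde} yields $\mathbb{E}\sup_{s\in[0,T]}|X_s^{t,x}|^{2p}\leq C(p,T)(1+|x|^{2p})$. The growth factor $(1+|x|^{2p}+|x'|^{2p})$ in the statement matches the general linear-growth formulation of Kunita; since our coefficients are bounded it is in fact unnecessary, and the cleaner bounds below will already be dominated by the claimed right-hand side.

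First I would control the spatial increment at a common starting time. Fixing $t'$ and writing $D_s=X_s^{t',x'}-X_s^{t',x}$, subtraction in \eqref{eq:sde} gives an equation for $D_s$ with $D_s=x'-x$ for $s\leq t'$; applying BDG to the stochastic integral, the Lipschitz bound on $b,\sigma$, and then Gronwall in $s$ produces $\mathbb{E}\sup_{s\in[0,T]}|X_s^{t',x'}-X_s^{t',x}|^{2p}\leq C(p,T)|x'-x|^{2p}$.

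Next comes the temporal increment with $x$ held fixed; assume without loss of generality $t\leq t'$. On $[0,t)$ both flows equal $x$; on $[t,t']$ we have $X_s^{t',x}=x$ while $X_s^{t,x}$ already evolves, so there $X_s^{t,x}-x=\int_t^{s}b(r,X_r^{t,x})\,dr+\int_t^{s}\sigma(r,X_r^{t,x})\,dW_r$, whose $2p$-th moment is bounded by $C(p,T)|t'-t|^p$ by boundedness of $b,\sigma$ and BDG; in particular $\mathbb{E}|X_{t'}^{t,x}-x|^{2p}\leq C(p,T)|t'-t|^p$. On $[t',T]$ both $X^{t,x}$ and $X^{t',x}$ solve the same equation, but from the two initial data $X_{t'}^{t,x}$ and $x$ at time $t'$, so the BDG-Gronwall estimate of the spatial step, now carrying the random $\mathscr{F}_{t'}$-measurable initial discrepancy $X_{t'}^{t,x}-x$, gives $\mathbb{E}\sup_{s\in[t',T]}|X_s^{t,x}-X_s^{t',x}|^{2p}\leq C\,\mathbb{E}|X_{t'}^{t,x}-x|^{2p}\leq C(p,T)|t'-t|^p$. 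Combining the three regions yields $\mathbb{E}\sup_{s\in[0,T]}|X_s^{t',x}-X_s^{t,x}|^{2p}\leq C(p,T)|t'-t|^p$.

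Finally I would splice the two pieces: from $X_s^{t',x'}-X_s^{t,x}=(X_s^{t',x'}-X_s^{t',x})+(X_s^{t',x}-X_s^{t,x})$ and $|a+b|^{2p}\leq 2^{2p-1}(|a|^{2p}+|b|^{2p})$, the two displayed bounds give $\mathbb{E}\sup_s|X_s^{t',x'}-X_s^{t,x}|^{2p}\leq C(p,T)\big(|x'-x|^{2p}+|t'-t|^p\big)$, which is dominated by the stated right-hand side. I expect the main obstacle to be the temporal step: unlike the spatial increment, the two flows are driven over different time intervals, so one must isolate the frozen initial-time discrepancy $X_{t'}^{t,x}-x$ and then transport it through the common-dynamics interval $[t',T]$ via the flow structure and Gronwall, while ensuring all constants remain independent of $t,t',x,x'$.
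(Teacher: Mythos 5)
Your proof is correct, but it takes a genuinely different route from the paper: for this lemma the paper offers no argument at all and simply cites Lemma 4.5.6 of Kunita's book on stochastic flows, where the estimate is proved for coefficients of linear growth --- which is exactly where the factor $\big(1+|x|^{2p}+|x'|^{2p}\big)$ comes from. Your observation that under $(\A_1)$ (with $b,\sigma$ bounded by $K_1$) this factor is superfluous is accurate, and your self-contained argument in fact yields the stronger bound $C(p,T)\big(|x'-x|^{2p}+|t'-t|^{p}\big)$. The decomposition you use --- spatial increment at the common starting time $t'$ via BDG and Gronwall; temporal increment by freezing $x$, estimating $\mathbb{E}\sup_{s\in[t,t']}|X^{t,x}_s-x|^{2p}\leq C(p,T)|t'-t|^{p}$ from boundedness of the coefficients (with $|t'-t|^{2p}\leq T^{p}|t'-t|^{p}$ absorbing the drift contribution), then transporting the random $\mathscr{F}_{t'}$-measurable discrepancy $X^{t,x}_{t'}-x$ through $[t',T]$ by the same stability estimate --- is precisely the standard mechanism underlying Kunita's lemma, and every step goes through for the random (predictable) coefficients of \eqref{eq:sde} because $(\A_1)$ holds uniformly in $(\omega,r)$. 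Two minor points worth making explicit: the BDG step needs $2p\geq 2$, which the hypothesis $p\geq 1$ guarantees; and the ``a.s.'' appended to the statement is vacuous for the unconditional expectation bound you prove, consistent with your reading. What the citation buys the paper is brevity and a result valid under linear growth; what your direct proof buys is a sharper constant, transparency about where each hypothesis enters, and independence from an external reference whose setting (deterministic coefficients) does not literally match the random coefficients here.
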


The following proposition borrows ideas from \cite{pa-pe, qzh}. Although
$F(s,x)$ is not Lipschitz continuous on $x$, we still can derive the
continuity of $Y^{t,x}_t$ since the H$\ddot{\rm o}$lder continuity of
$F(s,x)$ on $x$.
\begin{prop}\label{26} Let conditions \emph{$(\A_1)$} be satisfied. For a given $p>2d+2$, suppose
  $F\in L^{p}_{\sP}W^{1,p}$ and $\vf\in L^{p}_{\sF_T}(\Omega;W^{1,p})$. If $(Y_{s}^{t,x})_{s\in[t,T]}$ is the solution of FBSDE (\ref{dz20}), then for $t\in[0,T]$, $x\in\mathbb{R}^d$, $(t,x)\longrightarrow Y_{t}^{t,x}$ is a.s. continuous.
\end{prop}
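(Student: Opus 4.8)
The plan is to prove continuity of $(t,x)\mapsto Y^{t,x}_t$ via the Kolmogorov continuity theorem, which requires an estimate of the form
\[
\bE\,\big|Y^{t',x'}_{t'} - Y^{t,x}_t\big|^{p} \leq C\big(|x'-x|^{p} + |t'-t|^{p/2}\big)^{1+\beta}
\]
for some $\beta>0$, on compact subsets of $[0,T]\times\R^d$. Since the exponent $p>2d+2$, once I establish a bound of the right order in $|x'-x|$ and $|t'-t|$, the margin over the dimension $d+1$ of the parameter space $(t,x)$ yields the required H\"older exponent and hence a.s. continuity. The first step is therefore to fix the target estimate and reduce everything to controlling increments of $Y^{t,x}_s$ in the two arguments separately.

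First I would estimate the increment in the spatial variable. Writing the difference of the two BSDEs \eqref{dz20} for parameters $(t,x')$ and $(t,x)$, I apply It\^o's formula to $|Y^{t,x'}_s - Y^{t,x}_s|^{p}$ and use the boundedness of $c,\nu$ (condition $(\A_1)$) together with standard BSDE a priori estimates of the type recorded in Remark \ref{29}. This bounds $\bE\sup_{s}|Y^{t,x'}_s - Y^{t,x}_s|^{p}$ by the driving terms
\[
\bE\,\big|\vf(X^{t,x'}_T)-\vf(X^{t,x}_T)\big|^{p}
+ \bE\int_{t}^{T}\big|F(r,X^{t,x'}_r)-F(r,X^{t,x}_r)\big|^{p}\,dr.
\]
Here is the crucial point flagged in the introduction: $\vf$ and $F(r,\cdot)$ are \emph{not} assumed Lipschitz, only $W^{1,p}$. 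To handle $|\vf(y')-\vf(y)|$ I invoke the Sobolev embedding $W^{1,p}\hookrightarrow C^{0,\gamma}$ with $\gamma = 1-d/p$ (valid since $p>d$), giving H\"older continuity of $\vf$ and of $F(r,\cdot)$ with a H\"older seminorm controlled by the respective $W^{1,p}$ norms. This reduces the two driving terms to expectations of $|X^{t,x'}_T-X^{t,x}_T|^{\gamma p}$-type quantities, which Lemma \ref{25} controls by $|x'-x|^{\gamma p}$ up to the polynomial-in-$x$ factor that is harmless on compacts.

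Next I would estimate the increment in the time argument; this splits into comparing $Y^{t',x}_{t'}$ with $Y^{t,x}_{t}$, using the same It\^o/BSDE machinery applied to the difference of the flows $X^{t',x}$ and $X^{t,x}$ (again bounded by Lemma \ref{25}, which supplies the $|t'-t|^{p}$ contribution), and additionally accounting for the fact that the backward equation is run over the shifted interval; here Remark \ref{29}(ii) and the a priori bound \eqref{dz42} give control of $Y$ and $Z$ over the short time gap $[t\wedge t', t\vee t']$. Combining the spatial and temporal estimates and choosing the H\"older exponent $\gamma=1-d/p$, the overall increment is bounded by a quantity of order $(|x'-x|+|t'-t|^{1/2})^{\gamma p}$, and since $p>2d+2$ forces $\gamma p>d+1$, the Kolmogorov criterion applies on each compact set, yielding a.s. joint continuity of $(t,x)\mapsto Y^{t,x}_t$. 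The main obstacle is precisely the non-Lipschitz data: getting a \emph{quantitatively sharp} H\"older exponent from the Sobolev embedding, and checking that after losing the factor $\gamma=1-d/p$ the exponent $\gamma p = p-d$ still exceeds $d+1$ — which is exactly where the hypothesis $p>2d+2$ is consumed, so I would verify this inequality carefully rather than treat it as routine.
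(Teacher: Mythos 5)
Your skeleton is exactly the paper's: It\^o's formula on a power of $|Y^{t',x'}_s-Y^{t,x}_s|$, the Sobolev embedding $W^{1,p}\hookrightarrow C^{0,\alpha}$ with $\alpha=1-d/p$ to substitute for the missing Lipschitz continuity of $\vf$ and $F$, Lemma \ref{25} for the flow increments, and then Kolmogorov's continuity theorem. But there is a genuine gap in the moment bookkeeping. You apply It\^o to the \emph{full} $p$-th power and invoke ``standard BSDE a priori estimates.'' The generator difference contains the term $\big(\nu(r,X^{t',x'}_r)-\nu(r,X^{t,x}_r)\big)Z^{t,x}_r$, which cannot be absorbed by Gronwall; after B-D-G it forces you to control
\begin{equation*}
\bE\Big[\sup_{s}|X^{t',x'}_s-X^{t,x}_s|^{p}\Big(\int_t^T|Z^{t,x}_s|^2\,ds\Big)^{p/2}\Big],
\end{equation*}
and Remark \ref{29} supplies \emph{exactly} $\bE\big(\int_t^T|Z^{t,x}_s|^2ds\big)^{p/2}<\infty$ and nothing more, so H\"older leaves no slack: you would need a moment of $\int|Z|^2$ of order strictly larger than $p/2$. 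The paper's proof resolves this by running the whole It\^o argument at the power $\beta p$ with $\beta<1$, so that H\"older with conjugate exponents $1/\beta$ and $1/(1-\beta)$ pairs $\big(\int|Z|^2\big)^{\beta p/2}$ against the available $p/2$-moment and dumps the factor $|X^{t',x'}-X^{t,x}|^{\beta p/(1-\beta)}$ onto $X$, which has moments of every order by Lemma \ref{25}; the same device is used for the $cY$ cross term and for the $F$ and $\vf$ terms after the embedding. As written, your spatial-increment step fails at precisely the $\nu Z$ term, and the fix (the $\beta$-reduction, legitimate because $p>2d+2$ leaves room to take $\beta p$ slightly below $p$) is the one nontrivial idea your sketch is missing.

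A second, smaller error is your Kolmogorov threshold. Since time increments enter only through $|t'-t|^{1/2}$ (Lemma \ref{25}), the final moment bound is of order $|x'-x|^{\alpha\beta p}+|t'-t|^{\alpha\beta p/2}$, so the time direction costs \emph{two} dimensions in the count: with the parabolic metric, whose balls of radius $r$ have volume of order $r^{d+2}$, the criterion is $\alpha\beta p>d+2$, not $\alpha\beta p>d+1$ as you state. Your threshold would let you wrongly conclude that $p>2d+1$ suffices; the correct computation $\alpha p=p-d>d+2\iff p>2d+2$ (with $\beta$ close to $1$) is exactly where the hypothesis is consumed, consistently with the paper's assumption $\beta p>2d+2$. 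Note also that the paper avoids your separate time/space split by estimating $\bE\sup_{s\in[0,T]}|Y^{t',x'}_s-Y^{t,x}_s|^{\beta p}$ jointly in the parameter $(t,x)$ (using the extension of $Y^{t,x}_s$ to $s<t$ from Remark \ref{29}(ii)), applying Kolmogorov to the field $(t,x)\mapsto Y^{t,x}_{\cdot}$ in the sup-norm over $s$ on each ball $\bar{B}(0,R)$, and then reaching $Y^{t,x}_t$ via the triangle inequality and path continuity in $s$, exhausting $\R^d$ by countably many balls; your decomposition can be made to work, but this joint formulation is cleaner and is what the localization at the end of the paper's proof relies on.
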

\begin{proof} By Remark \ref{29}, we know that FBSDE (\ref{dz20}) has a unique
solution $(Y^{t,x}_s,Z^{t,x}_s)_{s\in[0,T]}$ and it satisfies \eqref{dz42}.
For $t,t'\in[0,T]$, $x,x'\in\mathbb{R}^d$, $s\geq0$, $0<\beta<1$, assuming
without loss of any generality that $\beta p>2d+2$ and $|x-x'|\le 1$, we have
\begin{eqnarray}\label{dz34}
  &&\mathbb{E}\int_t^T{\rm e}^{\beta pKr}|Y_{s}^{t',x'}-Y_{s}^{t,x}|^{\beta p-2}|c(s,X^{t',x'}_s)Y^{t',x'}_s-c(s,X^{t,x}_s)Y^{t,x}_s|^{2}ds\nonumber\\
  &&\leq2\mathbb{E}\int_t^T{\rm e}^{\beta pKr}|Y_{s}^{t',x'}-Y_{s}^{t,x}|^{\beta p-2}\Big(|c(s,X^{t',x'}_s)|^2|Y^{t',x'}_s-Y^{t,x}_s|^2\nonumber\\
  &&\ \ \ \ \ \ \ \ \ \ \ \ \ \ \ \ \ \ \ \ \ \ \ \ \ \ \ \ \ \ \ \ \ \ \ \ \ \ \ \ \ \ \ \ \ \ +|c(s,X^{t',x'}_s)-c(s,X^{t,x}_s)|^2|Y^{t,x}_s|^2\Big)ds\nonumber\\
   &&\leq2K_1^2\mathbb{E}\int_t^T{\rm e}^{\beta pKr}|Y^{t',x'}_s-Y^{t,x}_s|^{\beta p}ds\nonumber\\
   &&\ \ \ +2K_1^2\mathbb{E}\int_t^T{\rm e}^{\beta pKr}|Y^{t',x'}_s-Y^{t,x}_s|^{\beta p-2}|X^{t,x,\eps}_s-X^{t,x}_s|^2|Y^{t,x}_s|^2ds\nonumber\\
 &&\leq2K_1^2\mathbb{E}\int_t^T{\rm e}^{\beta pKr}|Y^{t',x'}_s-Y^{t,x}_s|^{\beta p}ds+\eps\mathbb{E}\int_t^T{\rm e}^{\beta pKr}|Y^{t',x'}_s-Y^{t,x}_s|^{\beta p}ds\nonumber\\
 &&\ \ \ +C\Big(\mathbb{E}\int_t^T|Y^{t,x}_s|^{p}ds\Big)^{\beta}\Big(\mathbb{E}\int_t^T|X^{t',x'}_s-X^{t,x}_s|^{{\beta p}\over{1-\beta}}ds\Big)^{1-\beta},
\end{eqnarray}
where $\eps>0$ is a generic constant which can be taken sufficiently small.
Similarly, it follows that
\begin{eqnarray}\label{dz35}
&&\mathbb{E}\int_t^T{\rm e}^{\beta pKr}|Y_{s}^{t',x'}-Y_{s}^{t,x}|^{\beta p-2}|\nu(s,X^{t',x'}_s)Z^{t',x'}_s-\nu(s,X^{t,x}_s)Z^{t,x}_s|^2ds\nonumber\\
&&\leq2K_1^2\mathbb{E}\int_t^T{\rm e}^{\beta pKr}|Y^{t',x'}_s-Y^{t,x}_s|^{\beta p-2}|Z^{t',x'}_s-Z^{t,x}_s|^{2}ds\nonumber\\
&&\ \ \ +2K_1^2\mathbb{E}\int_t^T{\rm e}^{\beta
pKr}|Y^{t',x'}_s-Y^{t,x}_s|^{\beta
p-2}|X^{t',x'}_s-X^{t,x}_s|^2|Z^{t,x}_s|^2ds.
\end{eqnarray}
Noticing the $C^{0,\alpha}$ norm is controlled by the $W^{1,p}$ norm in view
of Sobolev embedding theorem, where $\alpha=1-{d\over p}<1$, we have
\begin{eqnarray}\label{dz36}
&&\mathbb{E}\int_t^T{\rm e}^{\beta pKr}|F(s,X^{t',x'}_s)-F(s,X^{t,x}_s)|^{\beta p}ds\nonumber\\
&&\leq\mathbb{E}\int_t^T{\rm e}^{\beta pKr}\|F(s,\omega)\|^{\beta p}_{C^{0,\alpha}}|X^{t',x'}_s-X^{t,x}_s|^{\alpha\beta p} ds\nonumber\\
&&\leq\Big(\mathbb{E}\int_t^T{\rm e}^{\beta pKr}\|F(s,\omega)\|^p_{W^{1,p}}ds\Big)^{\beta }\Big(\mathbb{E}\int_t^T{\rm e}^{\beta pKr}|X^{t',x'}_s-X^{t,x}_s|^{{\alpha\beta p}\over{1-\beta}}ds\Big)^{1-\beta}\nonumber\\
&&\leq C\Big(\mathbb{E}\int_t^T|X^{t',x'}_s-X^{t,x}_s|^{{\alpha\beta
p}\over{1-\beta}}ds\Big)^{1-\beta}.
\end{eqnarray}
Similar to above, we can also get
\begin{eqnarray}\label{dz37}
\mathbb{E}{\rm e}^{\beta pKr}|\varphi(X^{t',x'}_T)-\varphi(X^{t,x}_T)|^{\beta
p}\leq C\Big(\mathbb{E}|X^{t',x'}_T-X^{t,x}_T|^{{\alpha\beta
p}\over{1-\beta}}\Big)^{1-\beta}.
\end{eqnarray}
Now applying It$\hat {\rm o}$'s formula to ${\rm e}^{\beta
pKs}|Y_{s}^{t',x'}-Y_{s}^{t,x}|^{\beta p}$, we have
\begin{eqnarray}\label{dz38}
&&{\rm e}^{\beta pKs}|Y_{s}^{t',x'}-Y_{s}^{t,x}|^{\beta p}+{\beta pK}\int_{s}^{T}{\rm e}^{\beta pKr}|Y_{r}^{t',x'}-Y_{r}^{t,x}|^{\beta p}dr\nonumber\\
&&+{{{\beta p}({\beta p}-1)}\over2}\int_{s}^{T}{\rm e}^{\beta pKr}|Y_{r}^{t',x'}-Y_{r}^{t,x}|^{\beta p-2}|Z_{r}^{t',x'}-Z_{r}^{t,x}|^2dr\nonumber\\
&&\leq{\rm e}^{\beta
pKT}|\varphi(X_{T}^{t',x'})-\varphi(X_{T}^{t',x'})|^{\beta p}+({\beta
p}+2K_1^2{\beta p})\int_{s}^{T}{\rm e}^{\beta pKr}
|Y_{r}^{t',x'}-Y_{r}^{t,x}|^{\beta p}dr\nonumber\\
&&\ \ \ +{{\beta p}\over2}\int_{s}^{T}{\rm e}^{\beta pKr}|Y_{r}^{t',x'}-Y_{r}^{t,x}|^{\beta p-2}|c(r,X^{t',x'}_r)Y^{t',x'}_r-c(r,X^{t,x}_r)Y^{t,x}_r|^2dr\nonumber\\
&&\ \ \ +{{\beta p}\over{8K_1^2}}\int_{s}^{T}{\rm e}^{\beta pKr}|Y_{r}^{t',x'}-Y_{r}^{t,x}|^{\beta p-2}|\nu(r,X^{t',x'}_r)Z^{t',x'}_r-\nu(r,X^{t,x}_r)Z^{t,x}_r|^2dr\nonumber\\
&&\ \ \ +{\beta p}\int_{s}^{T}{\rm e}^{\beta pKr}
|Y_{r}^{t',x'}-Y_{r}^{t,x}|^{\beta p-2}|F(r,X^{t',x'}_r)-F(r,X^{t,x}_r)|^2dr\nonumber\\
&&\ \ \ -{{\beta p}\over2}\int_{s}^{T}{\rm e}^{\beta
pKr}(Y_{r}^{t',x'}-Y_{r}^{t,x})^{\beta
p-2}(Y_{r}^{t',x'}-Y_{r}^{t,x})(Z_{r}^{t',x'}-Z_{r}^{t,x})dW_r.
\end{eqnarray}
Taking expectation on both sides of \eqref{dz38}, by
(\ref{dz34})-(\ref{dz37}) we have
\begin{eqnarray}\label{dz39}
&&({\beta pK-\beta p-3K_1^2\beta p}-\eps)\mathbb{E}\int_{s}^{T}{\rm e}^{\beta pKr}|Y_{r}^{t',x'}-Y_{r}^{t,x}|^{\beta p}dr\nonumber\\
&&+{{{\beta p}(2{\beta p}-3)}\over4}\mathbb{E}\int_{s}^{T}{\rm e}^{\beta pKr}|Y_{r}^{t',x'}-Y_{r}^{t,x}|^{\beta p-2}|Z_{r}^{t',x'}-Z_{r}^{t,x}|^2dr\nonumber\\
&&\leq C\Big(\mathbb{E}|X^{t',x'}_T-X^{t,x}_T|^{{\alpha\beta p}\over{1-\beta}}\Big)^{1-\beta}+C\mathbb{E}\Big(\int_s^T|X^{t',x'}_r-X^{t,x}_r|^{{\beta p}\over{1-\beta}}dr\Big)^{1-\beta}\nonumber\\
&&\ \ \ +{{\beta p}\over4}\mathbb{E}\int_s^T{\rm e}^{\beta pKr}|Y^{t',x'}_r-Y^{t,x}_r|^{\beta p-2}|X^{t',x'}_r-X^{t,x}_r|^2|Z^{t,x}_r|^2dr\nonumber\\
&&\ \ \ +C\Big(\mathbb{E}\int_s^T|X^{t',x'}_r-X^{t,x}_r|^{{\alpha\beta
p}\over{1-\beta}}dr\Big)^{1-\beta}.
\end{eqnarray}
Then applying B-D-G inequality to \eqref{dz38} and using \eqref{dz39} with a
sufficiently large $K$, we have
\begin{eqnarray*}
&&\mathbb{E}\sup_{s\in[0,T]}|Y_{s}^{t',x'}-Y_{s}^{t,x}|^{\beta p}\nonumber\\
&&\leq C\mathbb{E}\int_{0}^{T}|Y_{s}^{t',x'}-Y_{s}^{t,x}|^{\beta p}ds+C\mathbb{E}\int_{0}^{T}|Y_{s}^{t',x'}-Y_{s}^{t,x}|^{\beta p-2}|Z_{s}^{t',x'}-Z_{s}^{t,x}|^2ds\nonumber\\
&&\ \ \ +C\Big(\mathbb{E}|X^{t',x'}_T-X^{t,x}_T|^{{\alpha\beta p}\over{1-\beta}}\Big)^{1-\beta}+C\mathbb{E}\Big(\int_0^T|X^{t',x'}_s-X^{t,x}_s|^{{\beta p}\over{1-\beta}}ds\Big)^{1-\beta}\nonumber\\
&&\ \ \ +C\mathbb{E}\int_0^T|Y^{t',x'}_s-Y^{t,x}_s|^{\beta p-2}|X^{t',x'}_s-X^{t,x}_s|^2|Z^{t,x}_s|^2ds+C\Big(\mathbb{E}\int_0^T|X^{t',x'}_s-X^{t,x}_s|^{{\alpha\beta p}\over{1-\beta}}ds\Big)^{1-\beta}\nonumber\\
&&\leq 
C\Big(\mathbb{E}|X^{t',x'}_T-X^{t,x}_T|^{{\alpha\beta p}\over{1-\beta}}\Big)^{1-\beta}+C\mathbb{E}\Big(\int_0^T|X^{t',x'}_s-X^{t,x}_s|^{{\beta p}\over{1-\beta}}ds\Big)^{1-\beta}\nonumber\\
&&\ \ \ +\eps\mathbb{E}\sup_{s\in[0,T]}|Y^{t',x'}_s-Y^{t,x}_s|^{\beta p}+C\mathbb{E}\sup_{s\in[0,T]}|X^{t,x,\eps}_s-X^{t,x}_s|^{\beta p}\Big(\int_0^T|Z^{t,x}_s|^2ds\Big)^{\beta p\over2}\nonumber\\
&&\ \ \ +C\Big(\mathbb{E}\int_0^T|X^{t',x'}_s-X^{t,x}_s|^{{\alpha\beta
p}\over{1-\beta}}ds\Big)^{1-\beta}.
\end{eqnarray*}
Hence,
\begin{eqnarray*}
&&(1-\eps)\mathbb{E}\sup_{s\in[0,T]}|Y_{s}^{t',x'}-Y_{s}^{t,x}|^{\beta p}\nonumber\\
&&\leq C\Big(\mathbb{E}|X^{t',x'}_T-X^{t,x}_T|^{{\alpha\beta p}\over{1-\beta}}\Big)^{1-\beta}+C\mathbb{E}\Big(\int_0^T|X^{t',x'}_s-X^{t,x}_s|^{{\beta p}\over{1-\beta}}ds\Big)^{1-\beta}\nonumber\\
&&\ \ \ +C\Big(\mathbb{E}\sup_{s\in[0,T]}|X^{t,x,\eps}_s-X^{t,x}_s|^{{\beta p}\over{1-\beta}}\Big)^{1-\beta}\Big(\mathbb{E}\Big(\int_0^T|Z^{t,x}_s|^2ds\Big)^{p\over2}\Big)^\beta\nonumber\\
&&\ \ \ +C\Big(\mathbb{E}\int_0^T|X^{t',x'}_s-X^{t,x}_s|^{{\alpha\beta
p}\over{1-\beta}}ds\Big)^{1-\beta}.
\end{eqnarray*}
By Lemma \ref{25}, it yields that
\begin{eqnarray*}\label{dz40}
\mathbb{E}\sup_{s\in[0,T]}|Y_{s}^{t',x'}-Y_{s}^{t,x}|^{\beta p}\leq
C(p,T)\Big(1+|x|^{p}+|x'|^{p}\Big)\Big(|x'-x|^{\alpha\beta
p}+|t'-t|^{{\alpha\beta p}\over2}\Big)\ \ \rm{a.s.}\ \ \ \
\end{eqnarray*}

Since $\beta p>2d+2$, by Kolmogorov continuity theorem (see e.g. Theorem
1.4.1 in \cite{ku}) we know that $Y_{s}^{(\cdot,\cdot)}$ has a continuous
modification for $t\in[0,T]$ and $x\in\bar{B}(0,R)$ with the norm
$\sup_{s\in[0,T]}|Y_{s}^{(\cdot,\cdot)}|$, where $\bar{B}(0,R)$ is the closed
ball in $\mathbb{R}^d$ with the center $0$ and the radius $R\in\mathbb{Z}^+$.
In particular,
\begin{eqnarray*}
\lim_{t'\rightarrow t\atop x'\rightarrow x}|Y_{t'}^{t',x'}-Y_{t'}^{t,x}|=0.
\end{eqnarray*}
Thus we have
\begin{eqnarray*}
\lim_{t'\rightarrow t\atop x'\rightarrow
x}|Y_{t'}^{t',x'}-Y_{t}^{t,x}|\leq\lim_{t'\rightarrow t\atop x'\rightarrow
x}(|Y_{t'}^{t',x'}-Y_{t'}^{t,x}|+|Y_{t'}^{t,x}-Y_{t}^{t,x}|)=0\ \ \ {\rm
a.s.}
\end{eqnarray*}
The convergence of the second term follows from the continuity of
$Y_{s}^{t,x}$ in $s$. That is to say $Y_{t}^{t,x}$ is a.s. continuous,
therefore $Y_{t}^{t,x}$ is continuous with respect to $t\in[0,T]$ and
$x\in\bar{B}(0,R)$ on a full-measure set $\Omega^R$. Taking
$\tilde{\Omega}=\bigcap_{R\in\mathbb{Z}^+}\Omega^R$, we have
$P(\tilde{\Omega})=1$. Since
$\bigcup_{R\in\mathbb{Z}^+}\bar{B}(0,R)=\mathbb{R}^d$, for any $t\in[0,T]$
and $x\in\mathbb{R}^d$, there exists an $R$ s.t. $x\in\bar{B}(0,R)$. On the
other hand, For any $\omega\in\tilde{\Omega}$, obviously
$\omega\in\Omega^{R}$, $R=1,2,\cdots$. So $Y_{t}^{t,x}$ is continuous with
respect to $t\in[0,T]$ and $x\in\mathbb{R}^{d}$ on $\tilde{\Omega}$.
Proposition \ref{26} is proved.
\end{proof}

Then we can get the correspondence between BSPDE and FBSDE in the linear
case.
\begin{thm}\label{27}
  Let conditions \emph{$(\A_1)$} and \emph{$(\PC)$} be satisfied. For a given $p>2d+2$, suppose
  $F\in L^{p}_{\sP}W^{1,p}$ and $\vf\in L^{p}_{\sF_T}(\Omega;W^{1,p})$, then
  the solution $(u,q)$ to BSPDE \eqref{dz17} satisfies
  \begin{eqnarray}\label{dz19}
  u(s,X^{t,x}_s)=Y^{t,x}_s
  \ \ {\rm for}\ {\rm all}\ s\in[t,T],\ x\in\R^d\ {\rm a.s.},
  \end{eqnarray}
where $X$ and $(Y,Z)$ are the solutions of SDE \eqref{eq:sde} and FBSDE
\eqref{dz20}, respectively.
\end{thm}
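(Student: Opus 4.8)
The plan is to identify the pair $\big(u(s,X^{t,x}_s),\,\wh q(s,X^{t,x}_s)\big)$, where $\wh q := q + u_x\sigma$, as a solution of the backward part of the FBSDE \eqref{dz20}, and then to conclude by the uniqueness recorded in Remark \ref{29}. The mechanism is the It\^o--Wentzell (generalized It\^o) formula applied to the composition of the semimartingale random field $u(\cdot,\cdot)$ with the It\^o process $X^{t,x}$. Writing \eqref{dz17} in forward-time differential form $du=-[\cL u+\cM q+F]\,ds+q^k\,dW^k_s$ and recalling $dX^i=b^i\,ds+\sigma^{ik}\,dW^k$, the formula contributes, besides the drift and martingale coefficients of $u$ itself, a transport term $u_{x^i}\,dX^i$, a second-order term $\tfrac12 u_{x^ix^j}\,d\la X^i,X^j\ra=\alpha^{ij}u_{x^ix^j}\,ds$, and a cross-variation term $q^k_{x^i}\,d\la X^i,W^k\ra=\sigma^{ik}q^k_{x^i}\,ds$. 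With the diffusion matrix normalized to $\alpha^{ij}=\tfrac12\sigma^{ik}\sigma^{jk}$, these are matched exactly by $\cL$ and $\cM$: the second-order contribution cancels $\alpha^{ij}u_{x^ix^j}$, the transport term cancels $b^iu_{x^i}$, and the cross term cancels $\sigma^{ik}q^k_{x^i}$. What survives in finite variation is $-[c\,u+\nu^k\wh q^k+F](s,X_s)\,ds$, the first-order pieces reassembling $\nu^k q^k$ into $\nu^k\wh q^k$ via $\wh q=q+u_x\sigma$, while the martingale part is $\wh q^k(s,X_s)\,dW^k_s$. Thus $Y_s:=u(s,X^{t,x}_s)$ and $Z_s:=\wh q(s,X^{t,x}_s)$ solve the backward equation of \eqref{dz20} with terminal datum $u(T,X_T)=\vf(X_T)$, and uniqueness yields \eqref{dz19}.

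The difficulty is that the generalized solution only satisfies $u\in L^2_{\sP}C_wW^{1,2}$, which is far too weak to license a pointwise It\^o--Wentzell expansion (that would require $u_{x^ix^j}$ and $q^k_{x^i}$ to exist as genuine functions and $u(\cdot,x)$ to be an It\^o process for each fixed $x$). Moreover, the degeneracy forbids the usual remedy of adding a viscosity term $\eps\Delta$ to \eqref{dz17}, since that would force a matching modification of the forward diffusion $X$. I would therefore first prove \eqref{dz19} under the additional assumption that $\vf$ and $F$ (and, if needed, the coefficients) are smooth with bounded derivatives. In that regime the higher-order theory---Theorem \ref{lem:l.eq.2} with its $W^{m,2}$ estimates, and the remark following Theorem \ref{c4:bounded2}---gives $u(s,\cdot)\in W^{m,2}$ for every $m$, so by Sobolev embedding $u$ and $q$ are classically differentiable in $x$ to the required order and $u(\cdot,x)$ is a bona fide It\^o process; the It\^o--Wentzell formula then applies rigorously and the computation above delivers $u(s,X^{t,x}_s)=Y^{t,x}_s$.

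To remove the smoothness assumption I would approximate $\vf,F$ by smooth $\vf^n,F^n$ converging in $L^p_{\sF_T}(\Omega;W^{1,p})$ and $L^p_{\sP}W^{1,p}$ respectively, with $p>d$ as in Proposition \ref{26}. Let $(u^n,q^n)$ and $(Y^n,Z^n)$ be the associated BSPDE and FBSDE solutions; the smooth case already gives $u^n(s,X^{t,x}_s)=Y^{n}_s$ for each $n$. On the BSPDE side, estimate \eqref{c4:bounded.01} yields $\bE\sup_{s}\|u^n-u\|_{1,p}^p\to0$, and since $p>d$ the embedding $W^{1,p}\hookrightarrow C^{0,\alpha}$, $\alpha=1-d/p$, upgrades this to uniform convergence of $u^n$ to $u$, whence $u^n(s,X^{t,x}_s)\to u(s,X^{t,x}_s)$ for all $s$, a.s. On the FBSDE side, uniform convergence of $\vf^n,F^n$ along the flow together with standard stability for Lipschitz BSDEs gives $Y^n_s\to Y^{t,x}_s$. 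Passing to the limit produces \eqref{dz19} first for a.e.\ $s$, and then for \emph{all} $s\in[t,T]$ and $x\in\R^d$ simultaneously, a.s., because both sides are continuous: $u(s,X^{t,x}_s)$ inherits joint continuity from the continuity of $u$ (Corollary \ref{dz111}) and of the stochastic flow (Lemma \ref{25}), while $Y^{t,x}_s$ is jointly continuous by Proposition \ref{26} and its continuity in $s$.

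The main obstacle is exactly this interplay of degeneracy and low regularity: the equation cannot be regularized internally without disturbing the probabilistic representation, so the argument must route through smooth data, where It\^o--Wentzell is legitimate, and then transfer the \emph{identity}---not the differential dynamics---to the limit. The two ingredients that make the transfer work are the uniform-in-$x$ convergence from the $W^{1,p}$ estimate with $p>d$ plus Sobolev embedding, and the joint continuity of $Y^{t,x}_s$ from Proposition \ref{26}; together they guarantee that the limit of the pointwise evaluations $u^n(s,X^{t,x}_s)$ is the evaluation of the limit, which is the only genuinely delicate point in the passage to the limit.
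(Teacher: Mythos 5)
Your overall architecture coincides with the paper's: prove the identity in a regularized regime via the It\^o--Wentzell formula plus uniqueness for the BSDE, pass to the limit, and upgrade the a.e.\ identity to all $(t,x)$ using the joint continuity of $Y^{t,x}_t$ (Proposition \ref{26}) and of $u$ (Corollary \ref{dz111}); your drift/martingale bookkeeping, including the identification $Z_s=\wh q(s,X^{t,x}_s)$ with $\wh q=q+u_x\sigma$, is correct. However, there is a genuine gap at the reduction step. You propose to smooth only the data $\vf,F$ while keeping the coefficients --- and hence the forward flow $X^{t,x}$ --- fixed, citing Theorem \ref{lem:l.eq.2} to conclude $u(s,\cdot)\in W^{m,2}$ for every $m$. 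But that theorem requires \emph{$(\A_m)$}, i.e.\ the coefficients themselves must have bounded derivatives up to order $m$, whereas Theorem \ref{27} assumes only \emph{$(\A_1)$}. For a degenerate equation there is no interior smoothing effect: with merely Lipschitz coefficients, smooth $\vf$ and $F$ do not give $u$ two classical derivatives in $x$ (nor $q$ one), so the It\^o--Wentzell step is not licensed in your ``smooth case''. Your parenthetical ``(and, if needed, the coefficients)'' is doing all the work: mollifying the coefficients $b,\sigma,c,\nu$ is unavoidable, and this is exactly what the paper does.

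Once the coefficients are mollified, your limit passage no longer applies as written. The forward process becomes $X^{t,x,\eps}\neq X^{t,x}$, so the FBSDE comparison must control terms such as $c^{\eps}(s,X^{t,x,\eps}_s)Y^{t,x,\eps}_s-c(s,X^{t,x}_s)Y^{t,x}_s$ through stability of the flow and the H\"older continuity of $F$ in $x$ coming from the Sobolev embedding (the paper's Step 2). More seriously, your BSPDE stability argument via \eqref{c4:bounded.01} only works when the two equations share the same coefficients: with perturbed coefficients, the difference $u^{\eps}-u$ solves an equation whose source contains $(\alpha^{\eps,ij}-\alpha^{ij})u_{x^ix^j}$ and $(\sigma^{\eps,ik}-\sigma^{ik})q^{k}_{x^i}$, and these second derivatives of $u$ and first derivatives of $q$ do not exist as functions for the limit (generalized) solution. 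The paper resolves this by an energy estimate followed by integration by parts, exploiting that the mollified coefficients \emph{and their first derivatives} are within $C\eps$ of the originals, which moves the offending derivatives onto $u^{\eps}-u$ and the $O(\eps)$ coefficient differences (its Step 3). Some substitute for this step is missing from your proposal; without it, the convergence $u^{\eps}\to u$ --- and hence the identity in the limit --- is not established, so the final continuity argument, which is otherwise exactly the paper's, has nothing to transfer.
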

\begin{proof}
\emph{Step 1. }First we smootherize all the coefficients in \eqref{dz20} and
\eqref{dz17}. For this, take a nonnegative function $\rho \in
  C_{0}^{\infty}(\R^d,\R^1)$
  such that $\int_{\R^d}\rho(x)dx =1$.
  For arbitrary $\eps > 0$ and a mapping $h:\R^d\longrightarrow\R^1$, we define $h^{\eps}$ by
  \begin{eqnarray*}\label{eq:S_eps}
    h^{\eps}(x) = \eps^{-d} \rho\left(\frac{x}{\eps}\right) \ast h(x)\ \ \ {\rm for}\ x\in\mathbb{R}^d.
  \end{eqnarray*}
Moreover, if $h$ is a vector or matrix, we get the smootherized $h^\eps$ by
smootherizing each element in $h$. In this way, we can smootherize all the
coefficients and get two equations with smootherized coefficients:
\begin{eqnarray}\label{dz21}
  Y^{t,x,\eps}_s& = &\vf^{\eps}(X^{t,x,\eps}_T) + \int_{s}^{T}\big[ c^{\eps}(r,X^{t,x,\eps}_r)Y^{t,x,\eps}_r + \nu^{\eps}(r,X^{t,x,\eps}_r)Z^{t,x,\eps}_r + F^{\eps}(r,X^{t,x,\eps}_r) \big] dr \nonumber\\
  &&- \int_{s}^{T} Z^{t,x,\eps}_r dW_r
\end{eqnarray}
and
\begin{numcases}{}\label{dz22}
  du^\eps(t,x) = - \big[ \cL^\eps u^\eps(t,x) + \cM^\eps q ^\eps(t,x)+ F^\eps(t,x) \big]dt
  + q^{\eps}(t,x) dW_{t}\nonumber\\
u^{\eps}(T,x) = \vf^{\eps}(x),\quad x\in \R^{d},
\end{numcases}
where $(Y^{\eps},Z^{\eps})$ and $(u^\eps,q^\eps)$ are the unique solutions of
\eqref{dz21} and \eqref{dz22}, respectively. Due to the smooth coefficients,
we know that all $X_s^{t,x,\eps}$, $Y_s^{t,x,\eps}$ and $u^\eps(t,x)$ have a
high regularity on variable $x$ such that $u^\eps(s,X^{t,x,\eps}_s)$,
$Y_s^{t,x,\eps}\in C^{1,2}([0,T]\times\R^d)$. By It$\hat {\rm o}$-Wentzell
formula it is not hard to deduce that $u^\eps(s,X^{t,x,\eps}_s)$ is also a
solution to FBSDE \eqref{dz21}. Due to the uniqueness of solution, we have
$u^\eps(s,X^{t,x,\eps}_s)=Y_s^{t,x,\eps}$ for a.e. $t\in[0,T]$, $x\in\R^d$
a.s., then the continuity with respect to $t,x$ ensures that this equality is
true for all $t\in[0,T]$, $x\in\R^d$ in a full measure set in $\Omega$.

\emph{Step 2. }We then prove that as for a.e. $x\in\mathbb{R}^d$,
\begin{eqnarray}\label{dz29}
 \mathbb{E}\sup_{s\in[t,T]}|Y^{t,x,\eps}_s-Y^{t,x}_s|^2\longrightarrow0,\ \ \ {\rm as}\ \eps\to0.
\end{eqnarray}
First noting condition $(\A_1)$ and the construction of convolution we have
\begin{eqnarray*}
  &&\lim_{\eps\to0}\mathbb{E}\int_t^T|b^\eps(s,X^{t,x,\eps}_s)-b(s,X^{t,x}_s)|^2ds\nonumber\\
  &&\leq\lim_{\eps\to0}2\mathbb{E}\int_t^T\Big(|b^\eps(s,X^{t,x,\eps}_s)-b(s,X^{t,x,\eps}_s)|^2+|b(s,X^{t,x,\eps}_s)-b(s,X^{t,x}_s)|^2\Big)ds\nonumber\\
  &&\leq\lim_{\eps\to0}2\mathbb{E}\int_t^T\Big(\sup_{y\in\mathbb{R}^d}|b^\eps(s,y)-b(s,y)|^2+K_1^2|X^{t,x,\eps}_s-X^{t,x}_s|^2\Big)ds\nonumber\\
  &&=\lim_{\eps\to0}2K_1^2\mathbb{E}\int_t^T|X^{t,x,\eps}_s-X^{t,x}_s|^2ds.
\end{eqnarray*}
A similar calculation leads to
\begin{eqnarray*}
\lim_{\eps\to0}\mathbb{E}\int_t^T|\sigma^\eps(s,X^{t,x,\eps}_s)
-\sigma(s,X^{t,x}_s)|^2ds\leq\lim_{\eps\to0}2K_1^2\mathbb{E}\int_t^T|X^{t,x,\eps}_s-X^{t,x}_s|^2ds.
\end{eqnarray*}
Hence applying It\^o's formula and B-D-G inequality we have
\begin{eqnarray*}
\lim_{\eps\to0}\mathbb{E}\sup_{s\in[t,T]}|X^{t,x,\eps}_s-X^{t,x}_s|^2ds=0,
\end{eqnarray*}
and there exists a subsequence of $\{X^{t,x,\eps}_s\}$, still denoted by
$\{X^{t,x,\eps}_s\}$, which satisfies
\begin{eqnarray*}
\lim_{\eps\to0}\sup_{s\in[t,T]}|X^{t,x,\eps}_s-X^{t,x}_s|^2ds=0\ \ \ {\rm
a.s.}
\end{eqnarray*}
In the rest of arguments, we always consider this a.s. continuous
subsequence.

In order to get \eqref{dz29}, we need to deal with the following convergence:
\begin{eqnarray}\label{dz30}
  &&\lim_{\eps\to0}\mathbb{E}\int_t^T|c^\eps(s,X^{t,x,\eps}_s)Y^{t,x,\eps}_s-c(s,X^{t,x}_s)Y^{t,x}_s|^2ds\nonumber\\
  &&\leq\lim_{\eps\to0}2\mathbb{E}\int_t^T\Big(|c^\eps(s,X^{t,x,\eps}_s)|^2|Y^{t,x,\eps}_s-Y^{t,x}_s|^2+|c^\eps(s,X^{t,x,\eps}_s)-c(s,X^{t,x}_s)|^2|Y^{t,x}_s|^2\Big)ds\nonumber\\
   &&\leq\lim_{\eps\to0}2K_1^2\mathbb{E}\int_t^T|Y^{t,x,\eps}_s-Y^{t,x}_s|^2ds\nonumber\\
   &&\ \ \ \ +4\mathbb{E}\int_t^T\lim_{\eps\to0}\Big(\sup_{y\in\mathbb{R}^d}|c^\eps(s,y)-c(s,y)|^2+K_1^2|X^{t,x,\eps}_s-X^{t,x}_s|^2\Big)|Y^{t,x}_s|^2ds\nonumber\\
  &&=\lim_{\eps\to0}2K_1^2\mathbb{E}\int_t^T|Y^{t,x,\eps}_s-Y^{t,x}_s|^2ds.
\end{eqnarray}
Similarly, we have
\begin{eqnarray}\label{dz31}
\lim_{\eps\to0}\mathbb{E}\int_t^T|\nu^{\eps}(s,X^{t,x,\eps}_s)Z^{t,x,\eps}_s-\nu(s,X^{t,x}_s)Z^{t,x}_s|^2ds\leq\lim_{\eps\to0}2K_1^2\mathbb{E}\int_t^T|Z^{t,x,\eps}_s-Z^{t,x}_s|^2ds.\
\ \ \ \
\end{eqnarray}
By Sobolev embedding theorem again, it yields that
\begin{eqnarray}\label{dz32}
&&\lim_{\eps\to0}\mathbb{E}\int_t^T|F^{\eps}(s,X^{t,x,\eps}_s)-F(s,X^{t,x}_s)|^2ds\nonumber\\
&&\leq\lim_{\eps\to0}2\mathbb{E}\int_t^T\|F(s,\omega)\|_{W^{1,p}}|X^{t,x,\eps}_s-X^{t,x}_s|^\alpha ds\nonumber\\
&&\leq\lim_{\eps\to0}2\Big(\mathbb{E}\int_t^T\|F(s,\omega)\|^2_{W^{1,p}}ds\Big)^{1\over2}\Big(\mathbb{E}\int_t^T|X^{t,x,\eps}_s-X^{t,x}_s|^{2\alpha}ds\Big)^{1\over2}\nonumber\\
&&\leq
\lim_{\eps\to0}C\Big(\mathbb{E}\int_t^T|X^{t,x,\eps}_s-X^{t,x}_s|^2ds\Big)^{\alpha\over2}=0.
\end{eqnarray}
Similarly, we also obtain
\begin{eqnarray}\label{dz33}
\lim_{\eps\to0}\mathbb{E}|\varphi^{\eps}(X^{t,x,\eps}_T)-\varphi(X^{t,x}_T)|^2\leq
\lim_{\eps\to0}C\Big(\mathbb{E}|X^{t,x,\eps}_T-X^{t,x}_T|^2\Big)^{\alpha\over2}=0.
\end{eqnarray}

Then applying It$\hat {\rm o}$ formula to ${\rm
e}^{Ks}|Y^{t,x,\eps}_s-Y^{t,x}_s|^2$ for $K\in\mathbb{R}^1$, we have
\begin{eqnarray*}
&&\mathbb{E}{\rm e}^{Ks}|Y^{t,x,\eps}_s-Y^{t,x}_s|^2+K\mathbb{E}\int_s^T{\rm e}^{Kr}|Y^{t,x,\eps}_r-Y^{t,x}_r|^2dr+\mathbb{E}\int_s^T{\rm e}^{Kr}|Z^{t,x,\eps}_r-Z^{t,x}_r|^2dr\nonumber\\
&&\leq\mathbb{E}{\rm e}^{KT}|\varphi^{\eps}(X^{t,x,\eps}_T)-\varphi(X^{t,x}_T)|^2+(2+4K_1^2)\mathbb{E}\int_s^T{\rm e}^{Kr}|Y^{t,x,\eps}_r-Y^{t,x}_r|^2dr\nonumber\\
&&+\mathbb{E}\int_s^T{\rm e}^{Kr}|c^\eps(s,X^{t,x,\eps}_r)Y^{t,x,\eps}_r-c(s,X^{t,x}_r)Y^{t,x}_r|^2dr\nonumber\\
&&+{1\over{4K_1^2}}\mathbb{E}\int_s^T{\rm e}^{Kr}|\nu^{\eps}(s,X^{t,x,\eps}_r)Z^{t,x,\eps}_r-\nu(s,X^{t,x}_r)Z^{t,x}_r|^2dr\nonumber\\
&&+\mathbb{E}\int_s^T{\rm
e}^{Kr}|F^{\eps}(s,X^{t,x,\eps}_s)-F(s,X^{t,x}_s)|^2ds.
\end{eqnarray*}
Hence, using (\ref{dz30})-(\ref{dz33}) we have
\begin{eqnarray*}
\lim_{\eps\to0}(K-2-6K_1^2)\mathbb{E}\int_s^T{\rm
e}^{Kr}|Y^{t,x,\eps}_r-Y^{t,x}_r|^2dr+{1\over2}\mathbb{E}\int_s^T{\rm
e}^{Kr}|Z^{t,x,\eps}_r-Z^{t,x}_r|^2dr=0.
\end{eqnarray*}
Taking $K$ sufficiently large we immediately get
\begin{eqnarray*}
\lim_{\eps\to0}\mathbb{E}\int_t^T\Big(|Y^{t,x,\eps}_s-Y^{t,x}_s|^2+|Z^{t,x,\eps}_s-Z^{t,x}_s|^2\Big)ds=0.
\end{eqnarray*}
Then \eqref{dz29} follows by a standard method of B-D-G inequality.

\emph{Step 3.} On the other hand, we can further prove
\begin{eqnarray*}
 \mathbb{E}\int_{0}^{T}\|u^\eps(t)-u(t)\|_{0,2}^2dt\longrightarrow0.
\end{eqnarray*}
Indeed, similar to inequality \eqref{100}, it is not hard to prove
\begin{eqnarray*}
&&\bE\int_{0}^{T}{\rm e}^{\lambda t}\left\Vert u(t)^{\varepsilon
}-u(t)\right\Vert
_{0,2}^{2}dt\\
&&\leq C{\rm e}^{\lambda T}\bE\left\Vert \varphi ^{\varepsilon }-\varphi
\right\Vert _{0,2}^{2}+C\bE\int_{0}^{T}{\rm e}^{\lambda t}\left\Vert
F(t)^{\varepsilon
}-F(t)\right\Vert _{0,2}^{2}dt \\
&&\ \ \ +C\bE\int_{0}^{T}\int_{\R^{d}}{\rm e}^{\lambda t}(u^{\varepsilon
}-u)\Big[(\alpha ^{\varepsilon ,ij}-\alpha ^{ij})u_{x^{i}x^{j}}+(\sigma
^{\varepsilon
,ik}-\sigma ^{ik})q_{x^{i}}^{k}\\
&&\ \ \ \ \ \ \ \ \ \ \ \ \ \ \ \ \ \ \ \ \ \ \ \ \ \ \ \ \ \ \ \ \ \ \ \ \
+(b^{\varepsilon ,i}-b^{i})u_{x^{i}}+(c^{\varepsilon }-c)u+(\nu ^{\varepsilon
}-\nu )q\Big]dxdt,
\end{eqnarray*}%
where $\alpha ^{\varepsilon ,ij}=\frac{1}{2}\sigma ^{\varepsilon ,ik}\sigma
^{\varepsilon ,jk}$, $\alpha ^{ij}=\frac{1}{2}\sigma ^{ik}\sigma ^{jk}$ and $%
\lambda $ is a sufficiently large number. By condition $(\A_1)$ and constructions of smootherized coefficients, we can deduce for each $\left( t,x,\omega \right)$,%
\begin{eqnarray*}
&&\left\vert D(\alpha ^{\varepsilon ,ij}-\alpha ^{ij})\right\vert +\left\vert
D(\sigma ^{\varepsilon ,ik}-\sigma ^{ik})\right\vert +\left\vert \alpha
^{\varepsilon ,ij}-\alpha ^{ij}\right\vert +\left\vert \sigma
^{\varepsilon ,ik}-\sigma ^{ik}\right\vert  \\
&&+\left\vert b^{\varepsilon ,i}-b^{i}\right\vert +\left\vert c^{\varepsilon
}-c\right\vert +\left\vert \nu ^{\varepsilon }-\nu \right\vert  \leq
C\varepsilon.
\end{eqnarray*}%
Thus, by integration by parts, it turns out that
\begin{eqnarray*}
&&\lim_{\varepsilon \rightarrow 0}\bE\int_{0}^{T}{\rm e}^{\lambda
t}\left\Vert
u^{\varepsilon}(t)-u(t)\right\Vert _{0,2}^{2}dt \\
&&\leq\lim_{\varepsilon \rightarrow 0}\varepsilon C{\rm e}^{\lambda T}
\bE\int_{0}^{T}\int_{\R^{d}}\Big[ |D\left( u^{\varepsilon }-u\right) |\left(
\left\vert Du\right\vert +\left\vert q\right\vert \right) +|u^{\varepsilon
}-u|\left( \left\vert Du\right\vert +\left\vert u\right\vert +\left\vert
q\right\vert \right)\Big] dxdt \\
&&\leq\lim_{\varepsilon \rightarrow 0}\varepsilon C{\rm e}^{\lambda T}
\bE\int_{0}^{T}\left( \left\Vert u^{\varepsilon }-u\right\Vert
_{1,2}^{2}+\left\Vert u\right\Vert _{1,2}^{2}+\left\Vert q\right\Vert
_{0,2}^{2}\right) dt \\
&&\leq\lim_{\varepsilon \rightarrow 0}\varepsilon C{\rm e}^{\lambda
T}\bE\Big[ \left\Vert \varphi ^{\varepsilon }\right\Vert
_{1,2}^{2}+\left\Vert \varphi \right\Vert _{1,2}^{2}+\int_{0}^{T}\left(
\left\Vert F^{\varepsilon}(t)\right\Vert _{1,2}^{2}+\left\Vert
F(t)\right\Vert _{1,2}^{2}\right) dt\Big]=0.
\end{eqnarray*}

Therefore, there exists a subsequence of $\{u^\eps\}$, still denoted by
$\{u^\eps\}$, such that $u^\eps(t,x)\longrightarrow u(t,x)$ as $\eps\to0$ for
a.e. $t\in[0,T]$, $x\in\R^d$ a.s., which implies that $Y^{t,x}_t=u(t,x)$ for
a.e. $t\in[0,T]$, $x\in\R^d$ a.s. in view of \eqref{dz29}. Noticing Corollary
\ref{dz111}, we know that $u(t,x)$ is continuous with respect to $(t,x)$,
which together with Proposition \ref{26} leads to
 \begin{eqnarray*}
  u(t,x)=Y^{t,x}_t
  \ \ {\rm for}\ {\rm all}\ t\in[0,T],\ x\in\R^d\ {\rm a.s.}
  \end{eqnarray*}
In particular,
 \begin{eqnarray*}
  u(s,X^{t,x}_s)=Y^{s,X^{t,x}_s}_s
  \ \ {\rm for}\ {\rm all}\ s\in[t,T],\ x\in\R^d\ {\rm a.s.}
  \end{eqnarray*}
By the uniqueness of solution of FBSDE \eqref{dz21}, \eqref{dz19} follows.
\end{proof}

Utilizing the connection between FBSDE and BSPDE in the linear case, we
further study the same kind of connection in the semi-linear case.
\begin{thm}\label{28}
  Suppose that the conditions in Theorem \ref{c4:bounded} are satisfied, then
  we have a same kind of connection as \eqref{dz19} between the solution $u$ to BSPDE \eqref{eq:bspde} and the solution $Y$ to FBSDE \eqref{eq:bsde}.
\end{thm}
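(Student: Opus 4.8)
The plan is to reduce the semi-linear correspondence to the linear one of Theorem \ref{27} by running \emph{matched} Picard iterations on the two sides. Throughout I take $p>2d+2$, so that Theorem \ref{27} and Corollary \ref{dz111} both apply. On the BSPDE side I set $u_0=0$ and let $u_n$ solve the linear BSPDE \eqref{dz17} with $c=0$, $\nu=0$, $a^{ij}=\alpha^{ij}$, and forcing term $F_n(t,x):=f(t,x,u_{n-1}(t,x))$; by condition $(\F')$, the bound \eqref{c4:bounded.12}, and the regularity $u_{n-1}\in L^p_{\sP}C_wW^{1,p}$ supplied by Theorem \ref{c4:bounded}, each $F_n\in L^p_{\sP}W^{1,p}$, so Theorem \ref{27} is applicable at every stage. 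This $u_n$ is precisely the $n$-th Picard iterate of the semi-linear BSPDE \eqref{eq:bspde}. On the FBSDE side I set $\wt{Y}^{0}\equiv 0$ and let $(\wt{Y}^{n,t,x},\wt{Z}^{n,t,x})$ solve the Picard iterate of \eqref{eq:bsde} with driver $f(r,X^{t,x}_r,\wt{Y}^{n-1,t,x}_r)$.

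First I would prove, by induction on $n$, that $\wt{Y}^{n,t,x}_s=u_n(s,X^{t,x}_s)$ for all $s\in[t,T]$ and $x\in\R^d$, a.s. The base case is immediate from $u_0=0$. Granting the claim for $n-1$, the $n$-th driver equals $f(r,X^{t,x}_r,u_{n-1}(r,X^{t,x}_r))=F_n(r,X^{t,x}_r)$, so $\wt{Y}^{n}$ solves the \emph{linear} FBSDE \eqref{dz20} with $c=\nu=0$ and forcing $F_n$; its associated linear BSPDE is exactly the equation defining $u_n$, whence Theorem \ref{27} gives $u_n(s,X^{t,x}_s)=\wt{Y}^{n,t,x}_s$ and closes the induction. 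Evaluating at $s=t$ and using $X^{t,x}_t=x$ yields the pointwise identity $u_n(t,x)=\wt{Y}^{n,t,x}_t$.

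Next I would pass to the limit $n\to\infty$. As in Step~3 of Theorem \ref{thm:sl.L2}, $\{u_n\}$ is Cauchy in $L^2_{\sP}W^{0,2}$ with limit $u$ (the solution of \eqref{eq:bspde}), so along a subsequence $u_n(t,x)\to u(t,x)$ for a.e.\ $(t,x)$, a.s. On the other hand, since $f$ is Lipschitz in $y$ by $(\F')$, the standard contraction estimate for Lipschitz BSDEs gives $\wt{Y}^{n,t,x}_t\to Y^{t,x}_t$ in $L^2(\Omega)$ for each fixed $(t,x)$, where $Y$ solves the FBSDE \eqref{eq:bsde}. Matching the two limits of the common quantity $u_n(t,x)=\wt{Y}^{n,t,x}_t$ along a further subsequence, I conclude $u(t,x)=Y^{t,x}_t$ for a.e.\ $(t,x)$, a.s.

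Finally I would upgrade this to an everywhere statement and transport it along the flow. Joint continuity of $u$ in $(t,x)$ is Corollary \ref{dz111}, while continuity of $(t,x)\mapsto Y^{t,x}_t$ for the semi-linear FBSDE follows by repeating the Kolmogorov-type estimate of Proposition \ref{26}, the term $c(s,X_s)Y$ there being replaced by the Lipschitz-in-$y$ driver $f(s,X_s,Y)$ with no essential change (this is where $p>2d+2$ is used). Hence $u(t,x)=Y^{t,x}_t$ holds for \emph{all} $(t,x)$ on a single full-measure set. Applying this identity at the random point $(s,X^{t,x}_s)$ gives $u(s,X^{t,x}_s)=Y^{s,X^{t,x}_s}_s$, and the flow property of \eqref{eq:sde} together with uniqueness of the FBSDE solution identifies $Y^{s,X^{t,x}_s}_s=Y^{t,x}_s$, which is the claimed connection \eqref{dz19}. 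I expect the main obstacle to be exactly this continuity/limit passage: because the diffusion $X^{t,x}$ is degenerate and its marginals need not possess densities, one cannot pass to the limit directly in $u_n(s,X^{t,x}_s)$ by an $L^2$ argument, which is why the proof routes through the $s=t$ evaluation and upgrades to all $(t,x)$ via the Sobolev-embedding continuity of both sides.
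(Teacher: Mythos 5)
Your proof is correct in substance, but it takes a genuinely different and considerably longer route than the paper. The paper's proof is a three-line \emph{freezing} argument: since the solution $u$ of the semi-linear BSPDE \eqref{eq:bspde} is already known to exist with $u\in L^p_{\sP}W^{1,p}$ (Theorem \ref{c4:bounded}), one sets $\hat F(t,x):=f(t,x,u(t,x))\in L^p_{\sP}W^{1,p}$, regards \eqref{eq:bspde} as the \emph{linear} equation with forcing $\hat F$, and applies Theorem \ref{27} once to get $u(s,X^{t,x}_s)=\hat Y^{t,x}_s$ for all $s,x$ a.s., where $\hat Y$ solves the linear FBSDE with driver $\hat F(r,X^{t,x}_r)$; but then $\hat F(r,X^{t,x}_r)=f(r,X^{t,x}_r,\hat Y^{t,x}_r)$ along the solution, so $(\hat Y,\hat Z)$ also solves the semi-linear FBSDE \eqref{eq:bsde}, and uniqueness (Remark \ref{29}) gives $\hat Y=Y$. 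This one-shot substitution completely bypasses the two pieces of extra work your route requires: the matching of a.e.\ limits of $u_n(t,x)$ against $L^2(\Omega)$ limits of $\wt Y^{n,t,x}_t$, and, more significantly, the new Kolmogorov-type continuity estimate for $(t,x)\mapsto Y^{t,x}_t$ in the semi-linear case --- in the paper's proof the ``for all $(s,x)$'' quality is inherited directly from Theorem \ref{27}, so continuity of the semi-linear $Y^{t,x}_t$ is never needed. Your extension of Proposition \ref{26} does go through, but the step you gloss as ``no essential change'' deserves one line: the Lipschitz-in-$v$ bound on $f_x$ in condition $(\F')$(3) is exactly what makes $x\mapsto f(s,x,v)-f(s,x,0)$ Lipschitz with constant $L|v|$, so that the driver difference splits into an $L|Y'-Y|$ term, an $L|Y|\,|X'-X|$ term (handled like the $c$-term in \eqref{dz34}), and the H\"older term $\|f(s,\cdot,0)\|_{C^{0,\alpha}}|X'-X|^{\alpha}$ (handled like \eqref{dz36}). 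What your approach buys is a constructive correspondence at the level of Picard iterates on both sides, which could be of independent (e.g.\ numerical) interest; what the paper's buys is brevity and the absence of any limit interchange. Finally, your explicit assumption $p>2d+2$ is the right reading of the theorem: the paper's statement nominally only assumes the hypotheses of Theorem \ref{c4:bounded} (where $p\geq 2$), but its own proof, like yours, invokes Theorem \ref{27} and hence implicitly requires $p>2d+2$.
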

\begin{proof}
Let $u$ be the solution of BSPDE \eqref{eq:bspde} and
$\hat{F}(t,x)=f(t,x,u(t,x))$. Obviously, $\hat{F}(t,x)\in L^{p}_{\sP}W^{1,p}$
and we regard BSPDE \eqref{eq:bspde} as a linear equation with generator
$\hat{F}$. By Theorem \ref{27} we know
\begin{eqnarray*}
  u(s,X^{t,x}_s)=\hat{Y}^{t,x}_s
  \ \ {\rm for}\ {\rm all}\ s\in[t,T],\ x\in\R^d\ {\rm a.s.},
  \end{eqnarray*}
where $(\hat{Y}^{t,x}_s,\hat{Z}^{t,x}_s)_{s\in[t,T]}$ is the solution of
FBSDE with generator $\hat{F}$ as follows:
\begin{eqnarray*}\label{dz27}
  \hat{Y}^{t,x}_s = \vf(X^{t,x}_T) + \int_{s}^{T} \hat{F}(r,X^{t,x}_r) dr - \int_{s}^{T} \hat{Z}^{t,x}_r dW_r.
\end{eqnarray*}
By the definition of $\hat{F}$, we know that
$(\hat{Y}^{t,x}_s,\hat{Z}^{t,x}_s)$ is also the solution of FBSDE
\eqref{eq:bsde}. Since Remark \ref{29}, the solution of FBSDE \eqref{eq:bsde}
is unique. Then Theorem \ref{28} follows immediately.
\end{proof}

\bigskip

\noindent\textbf{References}

\bibliographystyle{model1b-num-names}

\begin{thebibliography}{[99]}

\bibitem{be} A. Bensoussan, {\em Stochastic maximum principle for distributed parameter systems}. J. Franklin Inst., 315 (1983), 387--406.

\bibitem{ba-ra-te} V. Barbu, A. R\u{a}\c{s}canu, G. Tessitore, {\em Carleman estimates and controllability of linear stochastic heat
  equations}. Appl. Math. Optim., 47 (2003), 97--120.

\bibitem{ch-pa-yo} M. Chang, Y. Pang, J. Yong, {\em Optimal stopping problem for stochastic differential equations with
  random coefficients}. SIAM J. Control Optim., 48 (2009), 941--971.

\bibitem{du-me} K. Du, Q. Meng, {\em A Revisit to ${W}^n_2$-theory of Super-parabolic
  Backward Stochastic Partial Differential Equations in ${R}^d$}. Stoch. Proc. Appl., 120 (2010), 1996--2015.

\bibitem{du-ta-zh} K. Du, S. Tang, Q. Zhang, {\em $W^{m,p}$-Solution ($p\ge 2$) of Linear Degenerate Backward Stochastic Partial Differential Equations
in the Whole Space}. Preprint, arXiv:1105.1428 [math.PR].

\bibitem{en-ka} N. Englezos, I. Karatzas, {\em Utility maximization with habit formation: dynamic programming and
  stochastic PDEs}. SIAM J. Control Optim., 48 (2009), 481--520.

\bibitem{ha} U.G. Haussmann, {\em The maximum principle for
  optimal control of diffusions with partial information}. SIAM J. Control Optim., 25 (1987), 341--361.

\bibitem{hu-ma-yo} Y. Hu, J. Ma, J. Yong, {\em On semi-linear degenerate backward stochastic partial differential
  equations}. Probab. Theory Relat. Fields, 123 (2002), 381--411.

\bibitem{kr-ro} N.V. Krylov, B.L. Rozovskii, {\em Stochastic evolution equations}. J. Sov. Math., 16 (1981), 1233--1277.

\bibitem{ku} H. Kunita, {\em Stochastic flows and stochastic differential equations}. Cambridge Univ. Press 1990.

\bibitem{ma-yo} J. Ma, J. Yong, {\em Adapted solution of a degenerate backward SPDE with applications}. Stoch. Proc. Appl., 70 (1997), 59--84.

\bibitem{ma-yo2} J. Ma, J. Yong, {\em On linear degenerate backward stochastic partial differential
  equations}. Probab. Theory Relat. Fields, 113 (1999), 135--170.

\bibitem{ma-te} M. Mania, R. Tevzadze, {\em Backward Stochastic PDE and Imperfect Hedging}. Int. J. Theor. Appl. Financ., 6 (2003), 663-692.

\bibitem{mu-za} M. Mania, T. Zariphopoulou, {\em Stochastic partial differential equations and portfolio choice}. Contemparary Quantitative Finance: Essays in Honour of Eckhard Platen, C. Chiarella, A. Novikov eds., Berlin Heidelberg: Springer-Verlag, (2010), 195-215.

\bibitem{na-ni} N. Nagasa, M. Nisio, {\em Optimal controls for stochastic partial differential
  equations}. SIAM J. Control Optim., 28 (1990), 186--213.

\bibitem{pa-pe} E. Pardoux, S. Peng, {\em Backward stochastic differential equations and quasilinear parabolic partial differential equations}. Stochastic Partial Differential Equations, B. L. Rozuvskii, R. B. Sowers eds., Lect. Notes Control Inf. Sci., Berlin Heidelberg New York: Springer, 176 (1992), 200-217.

\bibitem{pe} S. Peng, {\em Probabilistic interpretation for systems of quasilinear parabolic partial differential equations}. Stochastics, 37 (1991), 61-74.

\bibitem{pe2} S. Peng, {\em Stochastic Hamilton-Jacobi-Bellman equations}. SIAM J. Control Optim., 30 (1992), 284--304.

\bibitem{ta} S. Tang, {\em The maximum principle for partially observed optimal control of stochastic differential equations}. SIAM J. Control Optim., 36 (1998), 1596--1617.

\bibitem{ta-zh} S. Tang, X. Zhang, {\em Null controllability for forward and backward stochastic parabolic
  equations}. SIAM J. Control Optim., 48 (2009), 2191--2216.

\bibitem{qzh} Q. Zhang, {\em Stationary Stochastic Viscosity Solutions of SPDEs}. Stoch. Dynam., to appear, Doi:10.1142/S0219493711003498.

\bibitem{zh} X. Zhou, {\em On the necessary conditions of optimal controls for stochastic partial differential equations}. SIAM J. Control Optim., 31 (1993), 1462--1478.

\end{thebibliography}

\end{document}